\theoremstyle{plain}
\newtheorem{theorem}{Theorem}[section]
\newtheorem{lemma}[theorem]{Lemma}
\newtheorem{proposition}[theorem]{Proposition}
\newtheorem{corollary}[theorem]{Corollary}
\theoremstyle{definition}
\theoremstyle{remark}
\def\c{{\Gamma(G)}}
\def\m{\mathcal{M}(G)}
\def\mf{\mathfrak{m}_d(G)}
\def\cy{\mathrm{Cyc}(G)}
\begin{document}
\title[On Hamiltonicity and Perfect Codes in Non-Cyclic Graphs of Finite Groups]{On Hamiltonicity and Perfect Codes in Non-Cyclic Graphs of Finite Groups}
\author[Parveen]{$\text{PARVEEN}^{^*}$ }
\author[B. Bhattacharjya]{Bikash Bhattacharjya}

\address{$\text{}^1$Department of Mathematics, IIT Guwahati}

\email{p.parveenkumar144@gmail.com, b.bikash@iitg.ac.in}

\begin{abstract}
Let \( G \) be a finite non-cyclic group. Define \( \mathrm{Cyc}(G) \) as the set of all elements \( a \in G \) such that for any $b\in G$, the subgroup \( \langle a, b \rangle \) is cyclic. The \emph{non-cyclic graph} $\c$ of \( G \) is a simple undirected graph with vertex set \( G \setminus \mathrm{Cyc}(G) \), where two distinct vertices \( x \) and \( y \) are adjacent if the subgroup \( \langle x, y \rangle \) is not cyclic. An independent subset $C$ of the vertex set of a graph $\Gamma$ is called a perfect code of $\Gamma$ if every vertex of $V(\Gamma)\setminus C$ is adjacent to exactly one vertex in $C$. A subset \( T \) of the vertex set a graph \( \Gamma \) is said to be a \emph{total perfect code} if every vertex of \( \Gamma \) is adjacent to exactly one vertex in \( T \). In this paper, we prove that the graph $\c$ is Hamiltonian for any finite non-cyclic nilpotent group $G$. Also, we characterize all finite groups such that their non-cyclic graphs admit a perfect code. Finally, we prove that for a non-cyclic nilpotent group $G$, the non-cyclic graph $\c$ does not admit total perfect code.

 \end{abstract}

\subjclass[2020]{05C25}

\keywords{Non-cyclic graph, Hamiltonian cycle, maximal cyclic subgroup, nilpotent groups \\ *  Corresponding author}

\maketitle
\section{Introduction}
Graphs associated with groups and other algebraic structures have been actively investigated due to their valuable applications in various fields, such as automata theory, data mining and network design (cf. \cite{a.AbawajyRees2016,kelarev2002ring,kelarev2003graph,a.kelarev2009cayley,kelarev2004labelled}). All the groups considered in this paper are finite. Let $G$ be a non-cyclic group and let \( x, y \in G \). The subgroup generated by \( x \) and \( y \), denoted \( \langle x, y \rangle \), is the smallest subgroup of \( G \) that contains both \( x \) and \( y \).  Note that
$
\langle x, y \rangle = \{ g_1 g_2 \dots g_n \mid g_i \in \{x, y \}, \, n \geq 1 \}.
$
 For $x\in G$, the \emph{cyclizer $\mathrm{Cyc}_G(x)$ of $x$}  is the set $\{y\in G : \langle x, y\rangle \text{ is cyclic\}}$. The \emph{cyclizer} $\cy$ of $G$ is the set $$\{x\in G: \langle x,y \rangle \text{ is cyclic for all } y\in G  \}.$$
Observe that $\cy$ is a normal subgroup of $G$. The \emph{non-cyclic graph} $\c$ of $G$ is a simple undirected graph whose vertex set is $G\setminus \cy$, and two vertices $x$ and $y$ are adjacent if the subgroup generated by $x$ and $y$ is non-cyclic.
In 2007, Abdollahi and Hassanabadi \cite{a.Abdollahi2007} introduced the concept of the non-cyclic graph and established basic graph theoretical properties.  In particular, they explored the regularity, clique number, and uniqueness properties of $\c$ in relation to the underlying group structure. In \cite{a.Abdollahi2009}, the authors characterized all finite groups whose non-cyclic graph has clique number at most $4$. Later, the concept of non-cyclic graphs of finite groups has been explored by several researchers. Costa et al. \cite{a.costaEulerian2018} studied the Eulerian properties of non-cyclic graphs of finite groups, demonstrating that certain conditions on the group's structure can guarantee the existence of Eulerian circuits. Ma and Su \cite{a.Ma2020Finite} classified finite non-cyclic groups whose non-cyclic graphs have positive genus. More recently, Ma et al. \cite{a.Maautomorphism2022} characterized the automorphism group of the non-cyclic graph of a finite group. 

In \cite{a.Abdollahi2009}, the authors obtained sufficient conditions for the non-cyclic graph $\c$ to be Hamiltonian.
\begin{lemma}{\rm \cite[Lemma 4.1]{a.Abdollahi2009}}{\label{Abdollahi Hamiltonian Lemma}}
    Let $G$ be a finite group such that $\Gamma(G/\cy)$ is Hamiltonian. Then $\Gamma (G)$ is also Hamiltonian.
\end{lemma}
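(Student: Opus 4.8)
Write $N=\mathrm{Cyc}(G)$, a normal subgroup of $G$, and set $m=|N|$. The plan is to realize $\Gamma(G)$ as a \emph{blow-up} of $\Gamma(G/N)$ and transport a Hamiltonian cycle from the quotient up to $G$. The first preliminary step is to pin down the two vertex sets. I would show $\mathrm{Cyc}(G/N)=\{N\}$, so that the vertices of $\Gamma(G/N)$ are exactly the nontrivial cosets $gN$ (with $g\notin N$), and these partition $V(\Gamma(G))=G\setminus N$ into blocks $gN$ of size $m$. This triviality rests on two points: (i) for $a,b\in N$ the subgroup $\langle a,b\rangle$ is cyclic by definition of the cyclizer, so an easy induction shows every finitely generated subgroup of $N$ is cyclic, whence $N$ itself (being finite) is cyclic; and (ii) if $gN\in\mathrm{Cyc}(G/N)$, then for every $h\in G$ the subgroup $K=\langle g,h\rangle$ has cyclic image in $G/N$, so $K/(K\cap N)$ is cyclic, and since $K\cap N\le N$ is cyclic we may write $K=\langle x\rangle(K\cap N)=\langle x,d\rangle$ with $d\in N=\mathrm{Cyc}(G)$; the defining property of $d$ forces $\langle x,d\rangle$, hence $K$, to be cyclic, so $g\in\mathrm{Cyc}(G)=N$.

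The key structural observation (the \emph{lifting property}) is: if $gN$ and $hN$ are adjacent in $\Gamma(G/N)$, then $ga$ and $hb$ are adjacent in $\Gamma(G)$ for all $a,b\in N$. This is immediate from the fact that the canonical epimorphism $G\to G/N$ sends cyclic subgroups to cyclic subgroups: if $\langle ga,hb\rangle$ were cyclic, its image $\langle gN,hN\rangle$ would be cyclic too, contradicting adjacency in $\Gamma(G/N)$. Thus every edge of $\Gamma(G/N)$ blows up to a complete bipartite graph between the corresponding two blocks of $\Gamma(G)$.

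Now let $g_1N,g_2N,\ldots,g_kN,g_1N$ be a Hamiltonian cycle of $\Gamma(G/N)$, where $k=|G/N|-1\ge 3$, and list $N=\{t_1,\ldots,t_m\}$. I would then exhibit the explicit closed walk
\[
g_1t_1, g_2t_1, \ldots, g_kt_1,\; g_1t_2, g_2t_2, \ldots, g_kt_2,\; \ldots,\; g_1t_m, \ldots, g_kt_m,\; g_1t_1 .
\]
Each listed element lies in a block $g_iN\subseteq G\setminus N$, and the $km=|G|-|N|=|V(\Gamma(G))|$ entries are pairwise distinct, so every vertex of $\Gamma(G)$ occurs exactly once. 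For adjacency: within a round the consecutive entries $g_it_j,g_{i+1}t_j$ are adjacent by the lifting property since $g_iN\sim g_{i+1}N$; the entries joining two rounds, $g_kt_j$ and $g_1t_{j+1}$ (and the final pair $g_kt_m,g_1t_1$), are adjacent because the cycle closes, giving $g_kN\sim g_1N$. Hence this is a Hamiltonian cycle of $\Gamma(G)$.

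The lifting verification and the edge checks in the construction are routine; the only genuinely delicate point is the preliminary reduction $\mathrm{Cyc}(G/N)=\{N\}$, which is what guarantees that the blocks arising from the vertices of $\Gamma(G/N)$ exactly exhaust $V(\Gamma(G))$ with no coset left unaccounted for. Once that is secured, the serpentine construction above is essentially forced.
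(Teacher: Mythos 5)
Your proof is correct, and since the paper merely imports this lemma from \cite[Lemma 4.1]{a.Abdollahi2009} without reproving it, the right comparison is with the original argument, which yours essentially reproduces: the triviality of $\mathrm{Cyc}(G/\mathrm{Cyc}(G))$ (via $\mathrm{Cyc}(G)$ being cyclic and the factorization $K=\langle x\rangle(K\cap\mathrm{Cyc}(G))=\langle x,d\rangle$), the lifting of edges through the quotient map, and the coset-by-coset traversal of a Hamiltonian cycle of $\Gamma(G/\mathrm{Cyc}(G))$. You correctly isolate the one genuinely delicate point --- that $\mathrm{Cyc}(G/\mathrm{Cyc}(G))$ is trivial, so the blocks $g\mathrm{Cyc}(G)$ exhaust $V(\Gamma(G))$ --- and your verification of it is sound.
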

\begin{proposition}{\rm \cite[Proposition 4.2]{a.Abdollahi2009}}{\label{Abdollahi Hamiltonian propostion}}
    Let $G$ be a finite non-cyclic group such that $$|G|+|\cy|>2|\mathrm{Cyc}_G(x)|$$ for all $x\in Z(G)\setminus \cy$. Then $\Gamma (G)$ is Hamiltonian. In particular, $\c$ is Hamiltonian whenever $Z(G)=\cy$.
\end{proposition}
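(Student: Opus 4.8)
The plan is to verify Ore's condition for Hamiltonicity: a simple graph on $n \ge 3$ vertices in which every pair of non-adjacent vertices has degree sum at least $n$ is Hamiltonian. First I would record the degree formula. Since $\langle x,y\rangle$ is cyclic precisely when $y \in \mathrm{Cyc}_G(x)$, the non-neighbours of a vertex $x \in G\setminus\cy$ (counting $x$ itself) are exactly the elements of $\mathrm{Cyc}_G(x)\setminus\cy$, using that $\cy \subseteq \mathrm{Cyc}_G(x)$ and $x \in \mathrm{Cyc}_G(x)$. Hence $\deg(x) = (|G|-|\cy|)-(|\mathrm{Cyc}_G(x)|-|\cy|) = |G|-|\mathrm{Cyc}_G(x)|$, while the vertex count is $n = |G|-|\cy|$.

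The crucial step is to show that the bound $2|\mathrm{Cyc}_G(x)| < |G|+|\cy|$ holds for \emph{every} vertex $x$, not merely the central ones named in the hypothesis. For a non-central vertex $x$ this is automatic: every $y \in \mathrm{Cyc}_G(x)$ commutes with $x$ (as $\langle x,y\rangle$ is abelian), so $\mathrm{Cyc}_G(x) \subseteq C_G(x)$; since $x \notin Z(G)$ the centralizer is a proper subgroup, so $|C_G(x)| \le |G|/2$ by Lagrange, whence $2|\mathrm{Cyc}_G(x)| \le |G| < |G|+|\cy|$ because $|\cy|\ge 1$. For a central vertex $x \in Z(G)\setminus\cy$ the bound is exactly the hypothesis. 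Thus every vertex satisfies $|\mathrm{Cyc}_G(x)| < (|G|+|\cy|)/2$.

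Combining these, for any two vertices $x,y$ (in particular any non-adjacent pair) I obtain $\deg(x)+\deg(y) = 2|G| - |\mathrm{Cyc}_G(x)| - |\mathrm{Cyc}_G(y)| > 2|G| - (|G|+|\cy|) = |G|-|\cy| = n$, so Ore's condition is met and $\c$ is Hamiltonian provided $n \ge 3$, which is easily checked to hold for every finite non-cyclic group. The ``in particular'' clause is then immediate: if $Z(G)=\cy$ the set $Z(G)\setminus\cy$ is empty, so the hypothesis is vacuously true. The only real obstacle is the reduction in the second paragraph---recognising that centrality is the sole obstruction to the centralizer estimate, so that the hypothesis need only be imposed on $Z(G)\setminus\cy$; the remainder is a direct degree count feeding into Ore's theorem.
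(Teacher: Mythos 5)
Your proof is correct and follows essentially the same route as the original argument of Abdollahi and Hassanabadi (note that the present paper only quotes Proposition~\ref{Abdollahi Hamiltonian propostion} from \cite{a.Abdollahi2009} without reproducing its proof): the degree formula $\deg(x)=|G|-|\mathrm{Cyc}_G(x)|$, the observation that $\mathrm{Cyc}_G(x)\subseteq C_G(x)$ gives $2|\mathrm{Cyc}_G(x)|\le |G|$ automatically for non-central vertices, and a degree-sum criterion on the $n=|G|-|\mathrm{Cyc}(G)|$ vertices. The stated hypothesis is exactly the condition $\deg(x)>n/2$, so your invocation of Ore's theorem is the same argument in marginally weaker packaging than the Dirac-type bound used in the source, and your side checks (that $\mathrm{Cyc}(G)\subseteq\mathrm{Cyc}_G(x)$, that $n\ge 3$ for every finite non-cyclic group, and the vacuous case $Z(G)=\mathrm{Cyc}(G)$) are all sound.
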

 The condition given in Proposition \ref{Abdollahi Hamiltonian propostion} is not necessary. For example, consider the non-cyclic graph of the abelian group $\mathbb{Z}_4\times \mathbb{Z}_2$.
In this group, the element $(2,0)$ does not satisfy the condition given in Proposition \ref{Abdollahi Hamiltonian propostion}. However, the graph $\Gamma(\mathbb{Z}_4\times \mathbb{Z}_2)$ is Hamiltonian (see Figure \ref{Z2Z4}).

  Graph theory has been an essential area of mathematical research with numerous applications in coding theory, network design, and information security. One significant concept in this field is that of \textit{perfect codes}, which have been widely studied since the advent of coding theory in the late 1940s.  A \emph{code} in $\Gamma$ is a subset of $V(\Gamma)$. A code $C$ of $\Gamma$ is called a \emph{perfect code} if $C$ is an independent set such
that every vertex in $V(\Gamma)\setminus C$ is adjacent to exactly one vertex in $C$. 
The study of perfect codes has deep connections with information theory and combinatorial optimization, with significant contributions over the years (see \cite{Heden2008,Kratochvil1986}). It has been established that determining whether a given graph admits a perfect code is an NP-complete problem \cite{a.cull1999}. A code \( T \) of a graph \( \Gamma \) is said to be a \emph{total perfect code} if every vertex of \( \Gamma \) is adjacent to exactly one vertex in \( T \). If a graph admits a total perfect code, then by the definition, the subgraph induced by the total perfect code forms a matching.  It is known that determining whether a graph admits a total perfect code is NP-complete \cite{a.Gavlas1994}.
This computational complexity highlights the theoretical challenges in characterizing and classifying such codes in general graphs. The investigation of perfect codes has also extended to various graphs defined on groups, including Cayley graphs, power graphs, intersection power graphs, enhanced power graphs and Cayley sum graphs (see \cite{ a.maperfectproper,a.ma2017perfect,  a.Macayleysum, Martinez2009, Zhou2016}).

In this paper, we prove that the non-cyclic graph is Hamiltonian for all finite non-cyclic nilpotent groups. Also, we characterize all finite group $G$ such that the non-cyclic graph $\c$ admits a perfect code. Finally, we prove that for a finite non-cyclic nilpotent group $G$, the non-cyclic graph $\c$ does not admit a total perfect code.
\section{Preliminaries}

 A \emph{graph} $\Gamma$ is an ordered pair $(V(\Gamma), E(\Gamma))$, where $V(\Gamma)$ is a non-empty set of elements, called \emph{vertices}, and $E(\Gamma)$ is a set of unordered pairs of distinct elements of $V(\Gamma)$, called \emph{edges}. Two vertices $u$ and $v$ in a graph $\Gamma$ are said to be \emph{adjacent}, denoted $u\sim v$, if there exists an edge $\{u,v\} \in E(\Gamma)$. Otherwise, we write $u\nsim v$.  
 A vertex $v$ of a graph $\Gamma$ is called a \emph{dominating vertex} if $v$ is adjacent to all the vertices of $\Gamma$. If $\Gamma$ has a dominating vertex, then $\Gamma$ is called a \emph{dominatable graph}. A spanning path of $\Gamma$ is called a \emph{Hamiltonian path}. A spanning cycle of $\Gamma$ is called a \emph{Hamiltonian cycle}.  A graph containing a Hamiltonian cycle is called a \emph{Hamiltonian graph}.

Let $G$ be a finite group with identity element $e$. The order of an element $x\in G$ is denoted by $o(x)$. 
 Define the sets \( \pi(G) = \{ o(g) : g \neq e \in G \} \) and $\Omega_m(G)=\{g\in G : o(g)=m\}$. The \emph{exponent} $\mathrm{exp}(G)$ of \( G \) is the smallest positive integer \( n \) such that
$g^n = e$, for all $g \in G$. Indeed, $\mathrm{exp}(G)$  is the least common multiple of the orders of all the elements of $G$.  Define a relation \( \mathcal{R} \) on $G$ such that for \( g, h \in G \), we have \( g \mathcal{R} h \) if and only if \( \langle g \rangle = \langle h \rangle \). Clearly, the relation \( \mathcal{R} \) is an equivalence relation.  Let \( \mf \) denote the number of equivalence classes that contain elements of order \( d \). Observe that \( \mf \) is equal to the number of cyclic subgroups of order \( d \) in \( G \).
A \emph{maximal cyclic subgroup} of a group \( G \) is a cyclic subgroup that is not properly contained in any other cyclic subgroup of \( G \). The set of all maximal cyclic subgroups of \( G \) is denoted by \( \m \). Note that $|\m |=1$ if and only if $G$ is cyclic. 
Denote by $\mathbb{Z}_n$, the cyclic subgroup of order $n$. For $t \geq 1$, the dihedral $2$-group $\mathcal{D}_{2^{t+1}}$ is defined by the presentation   \[
        \mathcal{D}_{2^{t+1}}: = \langle a, b : a^{2^t} = 1, b^2 = 1, b^{-1} a b = a^{-1} \rangle.
        \]
        Note that \( \mathfrak{m}_2(\mathcal{D}_{2^{t+1}}) = 1 + 2^t \), and \( \mathfrak{m}_{2^j}(\mathcal{D}_{2^{t+1}}) = 1 \text{ for } 2 \leq j \leq t \).\\
 For $t \geq 2$, the generalized quaternion $2$-group $\mathcal{Q}_{2^{t+1}}$ is defined by the presentation
   \[
        \mathcal{Q}_{2^{t+1}} := \langle a, b : a^{2^t} = 1, a^{2^{t-1}} = b^2, b^{-1} a b = a^{-1} \rangle.
        \]
Observe that  \( \mathfrak{m}_4(\mathcal{Q}_{2^{t+1}}) = 1 + 2^{t-1} \), and \( \mathfrak{m}_{2^j}(\mathcal{Q}_{2^{t+1}}) = 1 \text{ for } j=1 \text{ and } 3 \leq j \leq t \).\\
 For $t \geq 3$, the semi-dihedral \( 2 \)-group $\mathcal{SD}_{2^{t+1}}$ is defined by the presentation
        \[
        \mathcal{SD}_{2^{t+1}} := \langle a, b : a^{2^t} = 1, b^2 = 1, b^{-1} a b = a^{-1+2^{t-1}} \rangle.
        \]
Note that \( \mathfrak{m}_2(\mathcal{SD}_{2^{t+1}}) = 1 + 2^{t-1} \), \( \mathfrak{m}_4(\mathcal{SD}_{2^{t+1}}) = 1 + 2^{t-2} \), and \( \mathfrak{m}_{2^j}(\mathcal{SD}_{2^{t+1}}) = 1 \text{ for } 3 \leq j \leq t \).

Let  \( H \) and \( K \) be subgroups of \( G \). The subgroup \( [H, K] \) of \( G \) is defined as the subgroup generated by all elements of the form \(  h^{-1} k^{-1} h k \), where \( h \in H \) and \( k \in K \). The lower central series of \( G \) is the descending chain of subgroups given by
\[
G \triangleright G^{(2)} \triangleright G^{(3)} \triangleright \cdots \triangleright G^{(i)} \triangleright G^{(i+1)} \triangleright \cdots,
\]
where \( G^{(2)} = [G, G] \) and \( G^{(i+1)} = [G^{(i)}, G] \) for \( i \geq 2 \). If the lower central series of $G$ terminates after finitely many non-trivial subgroups, the group \( G \) is called \emph{nilpotent}. All finite \( p \)-groups are examples of nilpotent groups.

\begin{lemma}{\rm \cite[Section 4, I]{a.Ma2020Finite}}{\label{p subgroups}}
    Let $p$ be a prime dividing the order of a
group $G$. Then $\mathfrak{m}_p(G) \equiv 1 (\mathrm{mod} \  p)$.
\end{lemma}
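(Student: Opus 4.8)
The plan is to count the elements of order $p$ in two different ways and to extract the desired congruence from the comparison. First I would observe that every cyclic subgroup of order $p$ contains exactly $p-1$ generators, namely its non-identity elements, each of which has order $p$; moreover every element of $\Omega_p(G)$ generates a unique cyclic subgroup of order $p$. Hence the set $\Omega_p(G)$ of elements of order $p$ is partitioned into the generator sets of the $\mathfrak{m}_p(G)$ distinct cyclic subgroups of order $p$, giving the identity $|\Omega_p(G)| = (p-1)\,\mathfrak{m}_p(G)$.

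Next I would count the solutions in $G$ of the equation $x^p = e$. Since any such solution is either the identity or an element of order exactly $p$, the number of solutions equals $1 + |\Omega_p(G)|$. The crucial step is to show that this number is divisible by $p$. For this I would use the counting argument behind Cauchy's theorem: let $S = \{(x_1,\dots,x_p)\in G^p : x_1 x_2 \cdots x_p = e\}$, so that $|S| = |G|^{p-1}$, because the first $p-1$ coordinates may be chosen freely and the last is then determined. The cyclic group $\mathbb{Z}_p$ acts on $S$ by cyclically permuting the coordinates, which is well defined since $x_1\cdots x_p = e$ forces $x_2\cdots x_p x_1 = e$. As $p$ is prime, every orbit has size $1$ or $p$, and an orbit has size $1$ precisely when $x_1 = \cdots = x_p$, i.e.\ exactly when the common coordinate is a solution of $x^p = e$. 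Counting $|S|$ through this orbit decomposition shows that the number of solutions of $x^p = e$ is congruent to $|S| = |G|^{p-1} \pmod p$, and since $p$ divides $|G|$ this is $\equiv 0 \pmod p$.

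Finally I would combine the two counts. From $1 + |\Omega_p(G)| \equiv 0 \pmod p$ together with $|\Omega_p(G)| = (p-1)\,\mathfrak{m}_p(G)$, I obtain $1 + (p-1)\,\mathfrak{m}_p(G) \equiv 0 \pmod p$. Using $p-1 \equiv -1 \pmod p$, this simplifies to $1 - \mathfrak{m}_p(G) \equiv 0 \pmod p$, that is $\mathfrak{m}_p(G) \equiv 1 \pmod p$, as required.

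I expect the main obstacle to be the divisibility step, namely proving that the number of solutions of $x^p = e$ is a multiple of $p$; the bookkeeping with the $\mathbb{Z}_p$-action on $S$ must be carried out carefully to confirm that fixed points correspond exactly to the diagonal tuples and that every nontrivial orbit has size exactly $p$, which is where primality of $p$ is used. The remaining steps are elementary counting and a short congruence manipulation.
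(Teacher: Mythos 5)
Your proof is correct and complete. The paper itself gives no argument for this lemma --- it is quoted from Ma and Su \cite{a.Ma2020Finite} as a known fact (it is a classical theorem going back to Frobenius) --- so there is no in-paper proof to compare against; your argument is the standard one, combining the partition $|\Omega_p(G)| = (p-1)\,\mathfrak{m}_p(G)$ (valid because distinct subgroups of prime order intersect trivially) with McKay's cyclic-shift action on $S=\{(x_1,\dots,x_p): x_1\cdots x_p=e\}$ to get $\#\{x : x^p=e\} \equiv |G|^{p-1}\equiv 0 \pmod p$, and the hypothesis $p \mid |G|$ is used exactly where it must be, in the last congruence. All the delicate points you flagged (well-definedness of the action, fixed points being precisely the diagonal tuples, orbits of size $1$ or $p$ by primality) are handled correctly, so your write-up in fact supplies a self-contained proof of a statement the paper only cites.
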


\begin{lemma}{\rm \cite[Theorem 5.4.10 (ii)]{a.Ma2020Finite}}{\label{unique p subgroups}}  Let $G$ be a $p$-group. Then $\mathfrak{m}_p(G)= 1$ if and only if either $G$ is a cyclic group or $G$ is a generalized quaternion group $2$-group.
\end{lemma}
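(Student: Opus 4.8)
My plan is to first record that $\mathfrak{m}_p(G)$ equals the number of subgroups of order $p$ of $G$, since every group of order $p$ is cyclic; thus the claim is the classical statement that a $p$-group has a \emph{unique} subgroup of order $p$ exactly when it is cyclic or generalized quaternion. The direction ($\Leftarrow$) is a direct verification: a cyclic $p$-group has a unique subgroup of each order dividing $|G|$, and in $\mathcal{Q}_{2^{t+1}}$ the element $a^{2^{t-1}}=b^2$ is the only involution, so there is a single subgroup of order $2$ --- precisely the value $\mathfrak{m}_2(\mathcal{Q}_{2^{t+1}})=1$ recorded above.

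For ($\Rightarrow$), suppose $\mathfrak{m}_p(G)=1$ and let $N$ be the unique subgroup of order $p$. I would first note that $N$ is characteristic, and that conjugation gives a homomorphism $G\to\mathrm{Aut}(N)\cong\mathbb{Z}_{p-1}$; as $G$ is a $p$-group and $|\mathrm{Aut}(N)|$ is prime to $p$, this map is trivial, so $N\le Z(G)$. Next, any abelian subgroup $A\le G$ has at most one subgroup of order $p$, so by the structure theorem for finite abelian groups $A$ is cyclic; in particular $Z(G)$ is cyclic. These reductions feed an induction on $|G|$: every proper subgroup $H$ with $|H|>1$ satisfies $\mathfrak{m}_p(H)=1$, hence is cyclic or generalized quaternion by the inductive hypothesis.

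The engine of the proof is then to exhibit a cyclic maximal subgroup of $G$ and to invoke the classification of $p$-groups possessing a cyclic subgroup of index $p$ --- namely $\mathbb{Z}_{p^n}$, $\mathbb{Z}_{p^{n-1}}\times\mathbb{Z}_p$, the modular group $M_{p^n}$, and (for $p=2$) $\mathcal{D}_{2^n}$, $\mathcal{SD}_{2^n}$, $\mathcal{Q}_{2^n}$. Counting the elements of $\Omega_p(G)$ in each family shows that all of them except the cyclic and generalized quaternion groups contain a copy of $\mathbb{Z}_p\times\mathbb{Z}_p$ (equivalently, more than $p-1$ elements of order $p$), so only these two have $\mathfrak{m}_p=1$. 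To produce the cyclic maximal subgroup I would take a maximal normal abelian subgroup $A$, which is cyclic by the reduction above and self-centralizing ($C_G(A)=A$), so that $G/A$ embeds in $\mathrm{Aut}(A)\cong\mathrm{Aut}(\mathbb{Z}_{p^k})$. When $p$ is odd the Sylow $p$-subgroup of this automorphism group is cyclic, and a short argument (or the classical count of Kulakoff, that a non-cyclic $p$-group with $p$ odd has $\equiv 1+p\pmod{p^2}$ subgroups of order $p$) forces $G=A$ to be cyclic; when $p=2$ the three involutions of $\mathrm{Aut}(\mathbb{Z}_{2^k})$ correspond to the dihedral, semidihedral, and quaternion extensions, of which only the last has a unique involution.

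I expect the genuine obstacle to be this final step: guaranteeing the cyclic maximal subgroup and controlling the extension $1\to A\to G\to G/A\to 1$ when $p=2$ and $|G/A|>2$. In particular, the subcase of the induction in which every maximal subgroup of $G$ is generalized quaternion must be shown to collapse --- here one uses that a generalized quaternion group already contains a cyclic subgroup of index $2$, which, pushed up and combined with the unique central involution $N$, forces $G$ itself to be generalized quaternion. Once a cyclic maximal subgroup is secured, the remainder is the routine bookkeeping of counting order-$p$ subgroups across the listed families.
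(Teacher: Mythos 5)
The paper offers no proof of this lemma at all: it is quoted from the literature as a classical result (the Gorenstein-type theorem that a $p$-group with a unique subgroup of order $p$ is cyclic or generalized quaternion), so your proposal can only be compared with the standard textbook argument, which it follows in outline. Your reductions are sound: $N \leq Z(G)$ via the conjugation map into $\mathrm{Aut}(N)$, abelian subgroups are cyclic by the structure theorem, and the backward direction is the direct verification already recorded in the paper's preliminaries. For odd $p$, your parenthetical shortcut is in fact the entire proof given the tools the paper already quotes: by Theorem \ref{Subgroups theorem}(i), a non-cyclic $p$-group with $p$ odd satisfies $\mathfrak{m}_p(G) \equiv 1+p \pmod{p^2}$, so $\mathfrak{m}_p(G)=1$ forces $G$ cyclic, and the maximal-normal-abelian machinery is not needed there.

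The genuine gap sits exactly where you predicted it, and your proposed repair does not close it. If every maximal subgroup of $G$ is generalized quaternion, then the cyclic index-$2$ subgroup of such a maximal subgroup has index $4$ in $G$, and nothing lets you ``push it up'': the sketch produces neither a cyclic maximal subgroup of $G$ nor a proof that $G$ is quaternion. The step can be closed, but by bounding $G/A$ directly rather than through the maximal subgroups. For $p=2$ and $|A| = 2^k$ with $k \geq 3$, one has $\mathrm{Aut}(A) \cong \mathbb{Z}_2 \times \mathbb{Z}_{2^{k-2}}$, and every subgroup of order at least $4$ contains the modular automorphism $\alpha \colon x \mapsto x^{1+2^{k-1}}$: every element of order $4$ squares to $\alpha$, and every Klein four subgroup consists of all three involutions, $\alpha$ among them. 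If $gA \in G/A$ maps to $\alpha$, set $H = \langle A, g\rangle$ and write $g^2 = a^s$; conjugating $g^2$ by $g$ forces $s \, 2^{k-1} \equiv 0 \pmod{2^k}$, so $s$ is even, and then $(a^i g)^2 = a^{\,2i(1+2^{k-2}) + s}$ with $1+2^{k-2}$ odd shows that some $a^i g \notin A$ is an involution, contradicting the uniqueness of $N$. Hence $[G:A] \leq 2$ (the cases $|A| \leq 4$ are checked by hand), $G$ has a cyclic maximal subgroup, and your classification-plus-counting finishes. One further slip to correct in that final count: the three involutions of $\mathrm{Aut}(A)$ yield dihedral or quaternion (inversion, according to the extension class), semidihedral, and \emph{modular} $M_{2^{k+1}}$ extensions --- not ``dihedral, semidihedral, quaternion'' --- and the modular group, which has three involutions, is precisely the case the computation above excludes; since you invoke the full classification including $M_{p^n}$ elsewhere, this is repairable, but as written the list of index-$2$ extensions is wrong.
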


A finite \( p \)-group of order \( p^n \) is said to have \emph{maximal class} if \( G^{(n-1)} \neq \{e\} \) and \( G^{(n)} = \{e\} \). For such groups, it holds that \( G / G^{(2)} \cong \mathbb{Z}_p \times \mathbb{Z}_p \) and \( G^{(i)} / G^{(i+1)} \cong \mathbb{Z}_p \) for \( 2 \leq i \leq n-1 \). Note that if $G$ is a cyclic $p$-group of order $p^t$, then $\mathfrak{m}_{p^i}(G)=1$ for $1\leq i\leq t$. The following theorem provides more insight into the class numbers of a finite \( p \)-group \( G \).

\begin{theorem}[{\cite{a.pgroupberkovi,b.pgroupisaac2006,a.pgroupkulakoff,a.pgroupmiller}}]{\label{Subgroups theorem}}
Let \( G \) be a finite non-cyclic \( p \)-group with exponent \( p^t \). If $G$ is not a $2$-group of maximal class, then
\begin{enumerate}
    \item[(i)] \( \mathfrak{m}_p(G) \equiv 1 + p \pmod{p^2} \), and
    \item[(ii)] \( p \mid \mathfrak{m}_{p^i}(G) \text{ for } 2 \leq i \leq t \).
\end{enumerate}
\end{theorem}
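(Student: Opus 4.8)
The plan is to translate both assertions about counts of cyclic subgroups into counts of group elements of prescribed order, and thereby reduce the whole theorem to one divisibility property of the integers
\[
N_i := \bigl|\{\, g \in G : g^{p^i} = e \,\}\bigr|, \qquad N_0 = 1.
\]
Each cyclic subgroup of order $p^i$ is generated by exactly $\phi(p^i) = p^{i-1}(p-1)$ of its elements, and two distinct such subgroups share no element of order $p^i$; hence the number of elements of order exactly $p^i$ equals both $\mathfrak{m}_{p^i}(G)\,p^{i-1}(p-1)$ and $N_i - N_{i-1}$, giving
\[
\mathfrak{m}_{p^i}(G) = \frac{N_i - N_{i-1}}{p^{i-1}(p-1)}.
\]
A short computation with the inverse of $p-1$ modulo $p^2$ shows that (i) is equivalent to $N_1 \equiv 0 \pmod{p^2}$, and, since $p \nmid (p-1)$, that (ii) is equivalent to $N_i \equiv N_{i-1} \pmod{p^i}$ for $2 \le i \le t$.

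The engine is Frobenius's theorem: because $p^i \mid |G|$ for $i \le t$, the number of solutions of $x^{p^i} = e$ is divisible by $p^i$, so $N_i \equiv 0 \pmod{p^i}$. This is already enough to recover the congruences modulo $p$, but it falls one power of $p$ short of (i) and (ii). Both parts will follow at once from the sharpened claim
\[
(\star) \qquad N_j \equiv 0 \pmod{p^{\,j+1}} \quad \text{for } 1 \le j \le t-1 :
\]
indeed $(\star)$ at $j = 1$ is exactly (i), while $(\star)$ at $j = i-1$ together with Frobenius applied to $N_i$ gives $N_i - N_{i-1} \equiv 0 \pmod{p^i}$, which is (ii).

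I would prove $(\star)$ by induction on $|G|$, using the index-$p$ subgroups of $G$ as the inductive device. If $M_1, \dots, M_r$ are the maximal subgroups and $p^d = |G/\Phi(G)|$ with $\Phi(G)$ the Frattini subgroup, then $r = (p^d-1)/(p-1)$, an element of $\Phi(G)$ lies in all $r$ of the $M_k$, and an element outside $\Phi(G)$ lies in exactly $(p^{d-1}-1)/(p-1)$ of them. Counting the solutions of $x^{p^j}=e$ with these multiplicities yields a linear relation expressing $N_j(G)$ through the numbers $N_j(M_k)$ and the count of solutions inside $\Phi(G)$. Non-cyclicity forces $d \ge 2$, which makes the two multiplicities congruent to $1 \pmod p$; feeding in the inductive hypothesis for the non-cyclic $M_k$ and Frobenius for the cyclic ones then propagates $(\star)$ to $G$.

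The main obstacle is the bookkeeping for cyclic maximal subgroups and, above all, the prime $p=2$. A cyclic $M_k \cong \mathbb{Z}_{p^s}$ contributes $N_1(M_k) = p$, which is not divisible by $p^2$, so one must show that these defective terms cancel in the weighted count in every case except $G$ cyclic or a $2$-group of maximal class. The difficulty concentrates at $d = 2$, since two-generated $2$-groups are exactly where maximal class occurs. Here Lemma \ref{unique p subgroups} is decisive: a $p$-group has a unique subgroup of order $p$ precisely when it is cyclic or generalized quaternion, and the dihedral, quaternion and semidihedral families constitute all $2$-groups of maximal class; in each of them $\{g : g^2 = e\}$ is too small, as confirmed by the stated values $\mathfrak{m}_2(\mathcal{D}_{2^{t+1}}) = 1+2^t$, $\mathfrak{m}_2(\mathcal{Q}_{2^{t+1}}) = 1$ and $\mathfrak{m}_2(\mathcal{SD}_{2^{t+1}}) = 1+2^{t-1}$, all congruent to $1 \pmod 4$ rather than to $3 = 1 + 2$. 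For odd $p$ the analysis is far smoother: one passes to the theory of regular $p$-groups, in which $\{g : g^{p^j}=e\}$ is a subgroup whose order is read off from the invariants of $G$ and is at least $p^{\,j+1}$ once $G$ is non-cyclic, so that $(\star)$ is immediate; the irregular odd-$p$ groups and the entire $p=2$ case are exactly the parts that demand the careful inductive count above.
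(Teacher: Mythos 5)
This theorem is quoted in the paper from the literature (Kulakoff, Miller, Berkovi\v{c}, Isaacs) with no proof given, so you are attempting to reconstruct a classical result rather than match an argument in the text. Your reduction step is correct and clean: the identity $\mathfrak{m}_{p^i}(G)=(N_i-N_{i-1})/\bigl(p^{i-1}(p-1)\bigr)$, the equivalence of (i) with $N_1\equiv 0\pmod{p^2}$ (via $(1+p)(p-1)\equiv -1\pmod{p^2}$), the equivalence of (ii) with $N_i\equiv N_{i-1}\pmod{p^i}$, and the derivation of both parts from the claim $(\star)$ together with Frobenius's theorem all check out (for the Frobenius step one needs $p^i\mid |G|$, which holds since a non-cyclic group of exponent $p^t$ has order at least $p^{t+1}$). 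The regular-group argument for odd $p$ is also essentially right: for regular $G$ one has $N_j=|G:\mho_j(G)|$, and since $\mho_j(G)\le\Phi(G)$ for odd $p$ the quotient $G/\mho_j(G)$ is non-cyclic of exponent exactly $p^j$, hence of order at least $p^{j+1}$.

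The genuine gap is that $(\star)$ --- which carries the entire content of the theorem --- is never actually proved. You set up the weighted identity over the maximal subgroups, namely $\sum_k N_j(M_k)=\frac{p^{d-1}-1}{p-1}\,N_j(G)+p^{d-1}F_j$ with $F_j$ the count inside $\Phi(G)$, and then flag the cancellation of the ``defective terms'' as something ``one must show''; but that cancellation is precisely where the difficulty lies, and no mechanism for it is exhibited. Moreover, the defective contributions are not only the cyclic $M_k$: for $p=2$, maximal subgroups of maximal class also violate $(\star)$ (e.g.\ $N_1(\mathcal{D}_8)=6\not\equiv 0\pmod 4$), and these occur in groups of Frattini rank $d\ge 3$ as well --- $\mathcal{D}_8$ is maximal in $\mathcal{D}_8\times\mathbb{Z}_2$, which has $d=3$ --- so your claim that the difficulty concentrates at $d=2$ is inaccurate, and the inductive hypothesis is simply unavailable for such $M_k$ whatever $d$ is. Since the irregular odd-$p$ groups and the entire $p=2$ case are explicitly deferred to ``the careful inductive count above,'' which is only a plan, what you have is a correct translation of the theorem into the element-count statement $(\star)$ plus a plausible strategy; completing it would amount to redoing the Kulakoff- and Berkovi\v{c}-type arguments that the paper cites instead of proving.
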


\begin{corollary}[{\cite{a.mishra2021lambda}}]{\label{subgroup corollary}}
Let \( G \) be a finite non-cyclic \( p \)-group of exponent \( p^t \). Then \( \mathfrak{m}_{p^i}(G) = 1 \) for some $i$ satisfying \( 1 \leq i \leq t \) if and only if \( G \) is isomorphic to 
   $ \mathcal{D}_{2^{t+1}}$,  $\mathcal{Q}_{2^{t+1}}$ or $ \mathcal{SD}_{2^{t+1}}$.
\end{corollary}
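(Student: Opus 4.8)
The plan is to prove the two implications separately, using Theorem \ref{Subgroups theorem} as the engine for the forward direction and the subgroup counts already recorded in the Preliminaries for the reverse direction. Throughout, recall that $\mathfrak{m}_{p^i}(G)$ counts the cyclic subgroups of order $p^i$.

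For the reverse implication, suppose $G$ is one of $\mathcal{D}_{2^{t+1}}$, $\mathcal{Q}_{2^{t+1}}$, or $\mathcal{SD}_{2^{t+1}}$. Here I would simply read off the counts listed in the Preliminaries: in each family there is an index $i$ with $1 \le i \le t$ for which $\mathfrak{m}_{2^i}(G) = 1$. Concretely, for $\mathcal{D}_{2^{t+1}}$ and $\mathcal{SD}_{2^{t+1}}$ one takes $i = t$, since the cyclic subgroup $\langle a\rangle$ of order $2^t$ is the unique subgroup of that order; for $\mathcal{Q}_{2^{t+1}}$ one takes $i = t$ when $t \ge 3$, and $i = 1$ when $t = 2$ (the unique involution $a^{2^{t-1}}=b^2$). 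This settles the ``if'' direction at once. The single degenerate case is $\mathcal{D}_4$ (the value $t=1$), where $\mathfrak{m}_2 = 3$; I would note this edge case separately, as the equivalence is really intended for $t\ge 2$.

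For the forward implication, suppose $\mathfrak{m}_{p^i}(G) = 1$ for some $1 \le i \le t$. The key step is to argue, by contradiction, that $G$ must be a $2$-group of maximal class. Indeed, if $G$ were not a $2$-group of maximal class, then Theorem \ref{Subgroups theorem}(i) gives $\mathfrak{m}_p(G) \equiv 1 + p \pmod{p^2}$; since $1+p \not\equiv 1 \pmod{p^2}$ for every prime $p$, this forces $\mathfrak{m}_p(G) \neq 1$, ruling out $i=1$. For $2 \le j \le t$, Theorem \ref{Subgroups theorem}(ii) gives $p \mid \mathfrak{m}_{p^j}(G)$; and because $G$ has exponent $p^t$ it contains an element of order $p^t$, whose powers yield a cyclic subgroup of every order $p^j$ with $1 \le j \le t$, so $\mathfrak{m}_{p^j}(G) \ge 1$ and hence $\mathfrak{m}_{p^j}(G) \ge p > 1$. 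Thus $\mathfrak{m}_{p^i}(G) \neq 1$ for all $1 \le i \le t$, contradicting the hypothesis. Therefore $G$ is a $2$-group of maximal class.

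Finally I would invoke the classical classification of $2$-groups of maximal class (Taussky's theorem): such a group is dihedral, generalized quaternion, or semidihedral, and matching its exponent $2^t$ with the parameter in each presentation identifies $G$ with $\mathcal{D}_{2^{t+1}}$, $\mathcal{Q}_{2^{t+1}}$, or $\mathcal{SD}_{2^{t+1}}$. The main obstacle is precisely this last input: the structural classification of maximal class $2$-groups is the genuinely deep fact, while everything else is bookkeeping, the one subtle point being that divisibility by $p$ together with positivity of $\mathfrak{m}_{p^j}(G)$ upgrades ``$p \mid \mathfrak{m}_{p^j}(G)$'' to ``$\mathfrak{m}_{p^j}(G) > 1$.'' As a self-contained alternative that avoids Taussky's theorem for the case $i = 1$, one could instead apply Lemma \ref{unique p subgroups} directly to deduce that $G$, being non-cyclic with $\mathfrak{m}_p(G) = 1$, is generalized quaternion.
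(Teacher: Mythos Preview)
The paper does not supply its own proof of this corollary; it is quoted as a cited result from \cite{a.mishra2021lambda}. So there is no in-paper argument to compare against, and your proposal should be judged on its own merits.

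Your argument is correct and is the natural one. The forward direction is exactly the intended use of Theorem~\ref{Subgroups theorem}: if $G$ is not a $2$-group of maximal class, then part~(i) forces $\mathfrak{m}_p(G)\neq 1$ and part~(ii), combined with the observation that an element of order $p^t$ guarantees $\mathfrak{m}_{p^j}(G)\ge 1$ for every $1\le j\le t$, forces $\mathfrak{m}_{p^j}(G)\ge p>1$ for $2\le j\le t$. This leaves only $2$-groups of maximal class, and the classical classification (dihedral, generalized quaternion, semidihedral) finishes the job. Your remark that Lemma~\ref{unique p subgroups} handles the case $i=1$ directly, without the classification, is a nice observation but not needed for the argument as stated. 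The reverse direction is bookkeeping from the counts recorded in the Preliminaries, and your case split for $\mathcal{Q}_{2^{t+1}}$ at $t=2$ versus $t\ge 3$ is accurate. You are also right to flag the boundary case $\mathcal{D}_4\cong\mathbb{Z}_2\times\mathbb{Z}_2$, where $\mathfrak{m}_2=3$; this is a wrinkle in the statement as literally written (the paper's conventions allow $t\ge 1$ for the dihedral family), not a defect in your reasoning.
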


\begin{theorem}{\rm \cite{b.dummit1991abstract}}{\label{nilpotent}}
 Let $G$ be a non-trivial finite group. Then the following statements are equivalent:
 \begin{enumerate}
     \item[(i)] $G$ is a nilpotent group.
     \item[(ii)] Every Sylow subgroup of $G$ is normal.
    \item[(iii)] $G$ is the direct product of its Sylow subgroups.
    \item[(iv)] For $x,y\in G$, the elements $x$ and $y$ commute whenever $o(x)$ and $o(y)$ are relatively prime.
 \end{enumerate}
 \end{theorem}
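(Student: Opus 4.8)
The plan is to prove the four statements equivalent by establishing the cyclic chain of implications $(i) \Rightarrow (ii) \Rightarrow (iii) \Rightarrow (iv) \Rightarrow (i)$, so that each assertion implies every other one.

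For $(i) \Rightarrow (ii)$, I would first record the \emph{normalizer condition}: in a nilpotent group, every proper subgroup $H$ is properly contained in its normalizer $N_G(H)$. This can be derived by working in the quotients of the upper central series $1 = Z_0 \le Z_1 \le \cdots \le Z_c = G$, whose top term equals $G$ precisely because the lower central series of a nilpotent group terminates. Choosing the least index $i$ with $Z_i \not\subseteq H$ (so $Z_{i-1} \subseteq H$), any $x \in Z_i \setminus H$ satisfies $[x,g] \in Z_{i-1} \subseteq H$ for all $g \in G$; hence the image of $x$ is central in $G/Z_{i-1}$ and therefore normalizes $H$, giving $x \in N_G(H) \setminus H$. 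Now let $P$ be a Sylow $p$-subgroup and set $N = N_G(P)$. Since $P \trianglelefteq N$ is the unique, hence characteristic, Sylow $p$-subgroup of $N$, a Frattini-type argument shows $N_G(N) = N$. If $P$ were not normal, then $N \ne G$, and the normalizer condition would force $N \subsetneq N_G(N) = N$, a contradiction; thus $P \trianglelefteq G$.

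For $(ii) \Rightarrow (iii)$, normality makes each Sylow subgroup $P_i$ (one for each prime $p_i \mid |G|$) unique, and distinct ones have coprime order, so $P_i \cap P_j = \{e\}$ and $[P_i, P_j] \subseteq P_i \cap P_j = \{e\}$ for $i \ne j$; hence the $P_i$ commute elementwise, $P_1 \cdots P_s$ is a subgroup of order $\prod_i |P_i| = |G|$, and the coprimality of orders yields $G = P_1 \times \cdots \times P_s$. For $(iii) \Rightarrow (iv)$, writing $x$ and $y$ in coordinates relative to this direct product, coprimality of $o(x)$ and $o(y)$ forces, in each coordinate $i$, at least one of the two components to be trivial, so the components commute coordinatewise and therefore $xy = yx$. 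For $(iv) \Rightarrow (i)$, note that for $i \ne j$ the elements of $P_i$ and $P_j$ have coprime orders, so $(iv)$ gives $[P_i, P_j] = \{e\}$; as before this yields $G = P_1 \times \cdots \times P_s$, each factor is a $p$-group and hence nilpotent, and a finite direct product of nilpotent groups is nilpotent, so $G$ is nilpotent.

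The main obstacle is the implication $(i) \Rightarrow (ii)$: it is the only step that genuinely uses the definition of nilpotency rather than formal manipulations with direct products, and it rests on two non-trivial inputs — the normalizer condition, obtained from the central series, and the self-normalizing property $N_G\!\left(N_G(P)\right) = N_G(P)$ of Sylow normalizers. Once $(i) \Rightarrow (ii)$ is secured, the remaining implications reduce to routine commutator identities and order computations.
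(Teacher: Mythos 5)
Your proof is correct, and since the paper gives no proof of this statement at all---it is quoted verbatim from the cited textbook \cite{b.dummit1991abstract}---the relevant comparison is with that source: your cycle $(i)\Rightarrow(ii)\Rightarrow(iii)\Rightarrow(iv)\Rightarrow(i)$, resting on the normalizer condition derived from the central series and on the self-normalizing property $N_G(N_G(P))=N_G(P)$ of Sylow normalizers, is essentially the standard Dummit--Foote argument. No gaps: the only mildly delicate points (passing between the lower and upper central series, and the uniqueness of the normal Sylow $p$-subgroup of $N_G(P)$ in place of a full Frattini argument) are handled correctly.
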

 \begin{lemma}{\rm \cite[Lemma 2.11]{a.chattopadhyay2021minimal}}{\label{nilpotent maximal}}
Let $G$ be a finite nilpotent group such that $G = P_1P_2\cdots P_r$, where $P_1, P_2,\ldots ,P_r$ are the Sylow subgroups of $G$. Then any maximal cyclic subgroup of $G$  is of the form $M_1M_2 \cdots M_r$, where $M_i$ is a maximal cyclic subgroup of $P_i$, for $1 \leq i \leq r$.
\end{lemma}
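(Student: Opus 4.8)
The plan is to reduce the statement to the standard decomposition of cyclic groups of coprime order and then run a maximality argument one Sylow factor at a time. First I would invoke Theorem \ref{nilpotent} to regard $G = P_1 \times P_2 \times \cdots \times P_r$ as the (internal) direct product of its Sylow subgroups. Consequently every $g \in G$ has a unique expression $g = g_1 g_2 \cdots g_r$ with $g_i \in P_i$, and elements lying in distinct factors commute by part (iv) of Theorem \ref{nilpotent}. Since the orders $|P_i|$ are pairwise coprime, each $o(g_i)$ is a power of a distinct prime, so $o(g) = o(g_1)\, o(g_2) \cdots o(g_r)$ and $\langle g \rangle = \langle g_1 \rangle \langle g_2 \rangle \cdots \langle g_r \rangle$, where $\langle g_i \rangle = \langle g \rangle \cap P_i$.

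The key structural fact I would establish is a correspondence between the cyclic subgroups of $G$ and tuples of cyclic subgroups of the factors. On the one hand, the previous observation shows that every cyclic subgroup $C = \langle g \rangle$ factors as $C = C_1 C_2 \cdots C_r$, where $C_i := C \cap P_i = \langle g_i \rangle$ is cyclic in $P_i$. Conversely, given cyclic subgroups $C_i \le P_i$, their product $C_1 C_2 \cdots C_r$ is cyclic: because the $C_i$ have pairwise coprime orders, multiplying generators of the factors yields a generator (equivalently $\prod_i \mathbb{Z}_{|C_i|} \cong \mathbb{Z}_{\prod_i |C_i|}$ by the Chinese Remainder Theorem). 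These two assignments are mutually inverse, and they are inclusion-preserving in both directions, since $C \subseteq D$ holds if and only if $C \cap P_i \subseteq D \cap P_i$ for every $i$ (one direction is trivial, and the other follows from $C = \prod_i (C \cap P_i) \subseteq \prod_i (D \cap P_i) = D$).

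With this correspondence in hand, the lemma follows by a componentwise maximality argument. If each $C_i = M_i$ is a maximal cyclic subgroup of $P_i$ and $C = M_1 \cdots M_r \subseteq D$ with $D$ cyclic, then intersecting with $P_i$ gives $M_i \subseteq D \cap P_i$; maximality of $M_i$ forces $M_i = D \cap P_i$ for each $i$, whence $D = C$, so $C$ is maximal cyclic. Conversely, if $C = C_1 \cdots C_r$ is maximal cyclic but some $C_j$ is properly contained in a cyclic subgroup $D_j \le P_j$, then replacing $C_j$ by $D_j$ produces a cyclic subgroup properly containing $C$, a contradiction; hence every $C_j$ is maximal cyclic in $P_j$.

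The individual steps are routine group theory; the only point requiring real care — and the main obstacle — is verifying that the assignment $C \mapsto (C \cap P_1, \ldots, C \cap P_r)$ is an inclusion-preserving bijection onto tuples of cyclic subgroups of the factors. This rests entirely on the pairwise coprimality of the orders $|P_i|$, which is what simultaneously forces each product $C_1 \cdots C_r$ to be cyclic and guarantees that the intersection $C \cap P_i$ recovers precisely the $p_i$-part $\langle g_i \rangle$ of a generator.
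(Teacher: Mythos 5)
The paper offers no proof of this lemma to compare against: it is imported verbatim from \cite[Lemma 2.11]{a.chattopadhyay2021minimal}, so the only question is whether your blind argument is sound. It is. The decomposition $\langle g\rangle=\langle g_1\rangle\langle g_2\rangle\cdots\langle g_r\rangle$ with $\langle g_i\rangle=\langle g\rangle\cap P_i$ (valid because each $g_i$ is a power of $g$ by coprimality of the $o(g_i)$), the inclusion-preserving bijection $C\mapsto(C\cap P_1,\ldots,C\cap P_r)$ between cyclic subgroups of $G$ and tuples of cyclic subgroups of the Sylow factors, and the componentwise maximality argument are all correct, and together they constitute the standard proof of this fact; indeed you prove slightly more than the statement requires, namely the two-way equivalence that $M_1M_2\cdots M_r$ is maximal cyclic in $G$ if and only if each $M_i$ is maximal cyclic in $P_i$ (the lemma as quoted only needs the forward factorization of a maximal cyclic subgroup). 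The one step you rightly flag as needing care, that $C\cap P_i$ recovers the $p_i$-part of a generator and that the correspondence is inclusion-preserving in both directions via $C=\prod_i(C\cap P_i)$, is exactly where the coprimality hypothesis is consumed, and your verification of it is complete.
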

Throughout the paper, we consider \( \mathcal{G} \) to be a nilpotent group of odd order with the identity element $e'$ and having no cyclic Sylow subgroups. We also consider  $\mathbb{P}$ to be a $2$-group with identity element $e''$ such that $\mathbb{P}$ is neither cyclic nor of maximal class. A non-cyclic nilpotent group \( G \) can be classified into one of the following categories:  

\begin{enumerate}
    \item \( G \cong \mathbb{Z}_n \times \mathcal{G} \), where \( \gcd(n, |\mathcal{G}|) = 1 \).  
    
    \item \( G \cong \mathbb{Z}_n \times \mathbb{P} \times \mathcal{G} \), where  \( \gcd(n, |\mathcal{G}|) = \gcd(2, n) = 1 \).  
    
    \item \( G \cong \mathbb{Z}_n \times \mathcal{Q}_{2^{t+1}} \times \mathcal{G} \), where  \( \gcd(n, |\mathcal{G}|) = \gcd(2, n) = 1 \).  
    
    \item \( G \cong \mathbb{Z}_n \times \mathcal{D}_{2^{t+1}} \times \mathcal{G} \), where  \( \gcd(n, |\mathcal{G}|) = \gcd(2, n) = 1 \).  
    
    \item \( G \cong \mathbb{Z}_n \times \mathcal{SD}_{2^{t+1}} \times \mathcal{G} \), where \( \gcd(n, |\mathcal{G}|) = \gcd(2, n) = 1 \).  
\end{enumerate}
For a positive integer \( n \), let \( \phi(n) \) represent the Euler's totient function.

\section{Hamiltonicity of $\c$}
In this section, we determine \(\mathrm{Cyc}(G)\) for finite non-cyclic nilpotent groups. We also show that the non-cyclic graph \(\Gamma(G)\) of a non-cyclic nilpotent group $G$ always has a Hamiltonian cycle.

\begin{lemma}{\label{Cyc lemma}}
   Let $G$ be a finite group such that  $G= G_1\times G_2 \times G_3$, where the orders $|G_1|,|G_2|$ and $|G_3|$ are pairwise relatively prime.   If $ x_i \in \langle y_i \rangle$ in $G_i$ for each $i\in \{1,2,3\}$, then $(x_1, x_2, x_3) \in \langle (y_1,y_2,y_3)\rangle$. 
\end{lemma}
\begin{proof}
    Let $|G_i|=n_i$ for each $i\in \{1,2,3\}$ and let $y= (y_1,y_2,y_3)$. Since $x_i\in \langle y_i \rangle$, there exists $m_i\in \mathbb{N}$ such that $x_i=  y_i^{m_i}$. Now $y^{m_1n_2n_3}=(x_1^{n_2n_3},e_2,e_3)$, where $e_i$ is the identity elements of $G_i$. Since $n_1, n_2, n_3$ are pairwise relatively prime, we obtain $\langle (x_1^{n_2n_3},e_2,e_3) \rangle = \langle (x_1,e_2,e_3)\rangle$. It follows that $(x_1,e_2,e_3) \in \langle y \rangle$. Similarly, $(e_1,x_2,e_3) \in \langle y \rangle$ and $(e_1,e_2,x_3) \in \langle y \rangle$. Consequently, $(x_1, x_2, x_3) \in \langle (y_1,y_2,y_3)\rangle$. This completes the proof.
\end{proof}
 Consider the subsets $S_1$ and $S_2$ of $\mathbb{Z}_{n} \times \mathcal{Q}_{2^{t+1}} \times \mathcal{G} $ given by \[
S_{1} = \left\{\left(\overline{x}, 1, e'\right) : \overline{x} \in \mathbb{Z}_{n}\right\} \text{ and } 
S_{2} = \left\{\left(\overline{x}, b^2, e'\right) :  \overline{x} \in \mathbb{Z}_{n}  \right\}.
\]
\begin{theorem}{\label{Cyclizer nilpotent}}
Let \( G \) be a finite non-cyclic nilpotent group.
Then
\[
\mathrm{Cyc}(G) = 
\begin{cases}
    \left\{\left(\overline{x}, e'\right) : \overline{x} \in \mathbb{Z}_{n}\right\} & \text{if } G = \mathbb{Z}_{n} \times \mathcal{G} \text{ and } \gcd\left(|\mathcal{G}|, n\right) = 1, \\[6pt]
    S_{1} \cup S_{2} & \text{if } G = \mathbb{Z}_{n} \times \mathcal{Q}_{2^{t+1}} \times \mathcal{G} \text{ and } \gcd\left(|\mathcal{G}|, n\right) = \gcd(n, 2) = 1, \\[6pt]
    \left\{\left(\overline{x}, e'', e'\right) : \overline{x} \in \mathbb{Z}_{n}\right\} & \text{if } G = \mathbb{Z}_{n} \times \mathbb{P} \times \mathcal{G} \text{ and } \gcd\left(|\mathcal{G}|, n\right) = \gcd(n, 2) = 1, \\[6pt]
    \left\{\left(\overline{x}, 1, e'\right) : \overline{x} \in \mathbb{Z}_{n}\right\} & \text{if } G = \mathbb{Z}_{n} \times \mathcal{D}_{2^{t+1}} \times \mathcal{G} \text{ and } \gcd\left(|\mathcal{G}|, n\right) = \gcd(n, 2) = 1, \\[6pt]
    \left\{\left(\overline{x}, 1, e'\right) : \overline{x} \in \mathbb{Z}_{n}\right\} & \text{if } G = \mathbb{Z}_{n} \times \mathcal{SD}_{2^{t+1}} \times \mathcal{G} \text{ and } \gcd\left(|\mathcal{G}|, n\right) = \gcd(n, 2) = 1.
\end{cases}
\]

\end{theorem}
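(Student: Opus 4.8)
The plan is to reduce the whole computation to two clean principles: a description of the cyclizer as an intersection, and a product formula over coprime direct factors. The starting observation is that for any finite group $H$ one has $\mathrm{Cyc}(H) = \bigcap_{M \in \mathcal{M}(H)} M$, the intersection of all maximal cyclic subgroups. Indeed, if $x \in \mathrm{Cyc}(H)$ and $M = \langle g \rangle$ is maximal cyclic, then $\langle x, g \rangle$ is cyclic and contains $M$, so maximality forces $\langle x, g\rangle = M$ and hence $x \in M$; conversely, if $x$ lies in every maximal cyclic subgroup, then for any $y$ the subgroup $\langle y \rangle$ sits inside some maximal cyclic subgroup $M \ni x$, whence $\langle x, y\rangle \subseteq M$ is cyclic. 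This reduces each factor computation to intersecting maximal cyclic subgroups, which are well understood for the groups in question.

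Next I would prove the product formula: if $G = G_1 \times \cdots \times G_r$ with $|G_1|, \ldots, |G_r|$ pairwise coprime, then $\mathrm{Cyc}(G) = \mathrm{Cyc}(G_1) \times \cdots \times \mathrm{Cyc}(G_r)$. For the inclusion $\subseteq$, if $(x_1, \ldots, x_r) \in \mathrm{Cyc}(G)$, then testing against $y = (e, \ldots, y_i, \ldots, e)$ and projecting the resulting cyclic group onto the $i$-th coordinate shows $\langle x_i, y_i\rangle$ is cyclic for all $y_i$, i.e. $x_i \in \mathrm{Cyc}(G_i)$. For $\supseteq$, the coprimality argument behind Lemma \ref{Cyc lemma} shows that $\langle(x_1,\ldots,x_r),(y_1,\ldots,y_r)\rangle$ contains each $\langle x_i,y_i\rangle$ placed in the $i$-th coordinate, so it equals $\langle x_1,y_1\rangle\times\cdots\times\langle x_r,y_r\rangle$; a direct product of cyclic groups of pairwise coprime orders is cyclic, so $x_i\in\mathrm{Cyc}(G_i)$ for all $i$ forces $(x_1,\ldots,x_r)\in\mathrm{Cyc}(G)$. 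Since $n$, $|\mathcal{G}|$ and the order of the $2$-group factor are pairwise coprime in every listed case, this formula applies throughout; moreover, as $\mathcal{G}$ is the direct product of its (odd, non-cyclic) Sylow subgroups by Theorem \ref{nilpotent}, it further reduces $\mathrm{Cyc}(\mathcal{G})$ to the cyclizers of those Sylow subgroups.

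It then remains to evaluate the cyclizer of each factor. For the cyclic factor $\mathbb{Z}_n$ every subgroup is cyclic, so $\mathrm{Cyc}(\mathbb{Z}_n) = \mathbb{Z}_n$. For a non-cyclic $p$-group $P$ I would show $\mathrm{Cyc}(P)$ is trivial unless $P$ is generalized quaternion: if $\mathrm{Cyc}(P) \neq \{e\}$ it contains an element $z$ of order $p$, and for any $w$ of order $p$ the group $\langle z, w\rangle$ is cyclic and hence has a unique subgroup of order $p$, forcing $\langle w \rangle = \langle z\rangle$; thus $P$ has a unique subgroup of order $p$, i.e. $\mathfrak{m}_p(P) = 1$, so by Lemma \ref{unique p subgroups} $P$ is cyclic or generalized quaternion. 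This immediately gives $\mathrm{Cyc}(P) = \{e\}$ for the odd Sylow subgroups of $\mathcal{G}$ (so $\mathrm{Cyc}(\mathcal{G}) = \{e'\}$), for $\mathbb{P}$ (not of maximal class, hence not generalized quaternion), and for $\mathcal{D}_{2^{t+1}}$ and $\mathcal{SD}_{2^{t+1}}$.

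The one factor with a nontrivial cyclizer, and the crux of the argument, is $\mathcal{Q}_{2^{t+1}}$: here the unique subgroup of order $2$ is the center $Z = \langle b^2\rangle = \langle a^{2^{t-1}}\rangle$. Every maximal cyclic subgroup has even order and therefore contains this unique involution, so $Z \subseteq \bigcap M = \mathrm{Cyc}(\mathcal{Q}_{2^{t+1}})$; on the other hand $\langle a\rangle$ and $\langle b\rangle$ are maximal cyclic subgroups with $\langle a\rangle \cap \langle b\rangle = Z$, yielding the reverse inclusion and hence $\mathrm{Cyc}(\mathcal{Q}_{2^{t+1}}) = \{1, b^2\}$. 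Substituting these factor computations into the product formula produces exactly the five cases of the statement. I expect the generalized quaternion computation—verifying precisely which subgroups are maximal cyclic and that their intersection is exactly $Z$—to be the most delicate step, together with the bookkeeping required to extend the coprimality mechanism of Lemma \ref{Cyc lemma} from a single generator to the two-generator subgroup equality used in the product formula.
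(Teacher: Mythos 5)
Your argument is correct, but it takes a genuinely different route from the paper's. The paper proves Theorem \ref{Cyclizer nilpotent} by a direct three-case verification inside the full group: the inclusions $\mathrm{Cyc}(G) \supseteq \cdots$ are obtained by exhibiting, via the coprimality mechanism of Lemma \ref{Cyc lemma}, a single cyclic subgroup containing both test elements, and the reverse inclusions by producing two distinct cyclic subgroups of equal order inside $\langle x, y \rangle$ --- using Theorem \ref{Subgroups theorem} for the odd part, $\mathfrak{m}_4(\mathcal{Q}_{2^{t+1}}) \geq 2$ for the quaternion case, and $\mathfrak{m}_2 \geq 2$ for $\mathbb{P}$, $\mathcal{D}_{2^{t+1}}$ and $\mathcal{SD}_{2^{t+1}}$. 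You instead modularize: (a) $\mathrm{Cyc}(H) = \bigcap_{M \in \mathcal{M}(H)} M$, the intersection of the maximal cyclic subgroups; (b) $\mathrm{Cyc}$ distributes over direct products whose factor orders are pairwise coprime; (c) per-factor evaluation, where Lemma \ref{unique p subgroups} identifies generalized quaternion groups as the only non-cyclic $p$-groups with nontrivial cyclizer --- your unique-subgroup-of-order-$p$ argument playing the role that the counting bound of Theorem \ref{Subgroups theorem} plays in the paper. All three steps check out; indeed in (b) you do not even need the equality $\langle x,y\rangle = \prod_i \langle x_i, y_i\rangle$, since the automatic containment $\langle x,y\rangle \leq \prod_i \langle x_i,y_i\rangle$, which is cyclic because the orders are pairwise coprime, already suffices; and in (c) a nontrivial $\mathrm{Cyc}(P)$ contains an element of order $p$ either because $\mathrm{Cyc}(P)$ is a subgroup or because powers of cyclizer elements remain in the cyclizer. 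Your route buys uniformity (no repetition across the five cases), a conceptual explanation of why $\mathcal{Q}_{2^{t+1}}$ is the unique exceptional factor (it is the only relevant factor with $\mathfrak{m}_p = 1$), and as a by-product a formula for the cyclizer of any finite nilpotent group in terms of its Sylow subgroups; the paper's route avoids exactly the bookkeeping you flagged --- verifying maximality of $\langle a\rangle$ and $\langle b\rangle$ in $\mathcal{Q}_{2^{t+1}}$ (via $(a^i b)^2 = b^2$) and proving the product formula in full generality --- at the cost of three parallel ad hoc arguments. It is worth noting that both proofs ultimately rest on the same quaternion fact, namely that $b^2$ lies in every nontrivial cyclic subgroup of $\mathcal{Q}_{2^{t+1}}$.
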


\begin{proof}
    We consider the following three cases.\\
    \textbf{Case I:} $G= \mathbb{Z}_n \times \mathcal{G}$, where $\gcd\left(|\mathcal{G}|, n\right) = 1$. 
    Let $(\overline{x}_1,e')\in \{\left(\overline{x}, e'\right) : \overline{x} \in \mathbb{Z}_{n} \}$ and let $(\overline{x}_2,y)$ be an arbitrary element of $G$. As in the proof of Lemma \ref{Cyc lemma}, we obtain $(\overline{x}_1, e'), (\overline{x}_2, y)\in \langle (\overline{1}, y)\rangle$. It follows that $\langle (\overline{x}_1, e'), (\overline{x}_2, y) \rangle$ is cyclic. Thus $\{\left(\overline{x}, e'\right) : \overline{x} \in \mathbb{Z}_{n} \} \subseteq \mathrm{Cyc}(G)$.

    Suppose $(\overline{x}, z)\in \mathrm{Cyc}(G) \setminus \{\left(\overline{x}, e'\right) : \overline{x} \in \mathbb{Z}_{n} \}$. Then $z\neq e'$. Let $p$ be a prime such that $p \ \vert \ o(z)$. Then there exists an element $z'\in \langle z\rangle$ such that $o(z')=p$. Note that $\mathcal{G}$ has a Sylow $p$-subgroup. Applying Theorem \ref{Subgroups theorem} to that Sylow $p$-subgroup, we find at least two cyclic subgroups of $\mathcal{G}$ of order $p$. Thus, there exists an element $z''\in \mathcal{G}$ such that $o(z'')=p$ and $\langle z' \rangle \neq \langle z'' \rangle$. Note that both $(\overline{0}, z')$ and $(\overline{0}, z'')$ belong to the subgroup $\langle (\overline{x}, z), (\overline{0}, z'')\rangle$. It follows that $\langle (\overline{x}, z), (\overline{0}, z'')\rangle$ contains two cyclic subgroups of order $p$. Thus the subgroup $\langle (\overline{x}, z), (\overline{0}, z'')\rangle$ is non-cyclic, which is a contradiction to the assumption that $(\overline{x}, z)\in \mathrm{Cyc}(G)$. Thus $\{\left(\overline{x}, e'\right) : \overline{x} \in \mathbb{Z}_{n} \}= \mathrm{Cyc}(G)$.\\
    \textbf{Case II:} \( G = \mathbb{Z}_n \times \mathcal{Q}_{2^{t+1}} \times \mathcal{G} \), where \( \gcd(|\mathcal{G}|, n) = \gcd(n, 2) = 1 \).
    In this case, we show that $\mathrm{Cyc}(G)= S_1\cup S_2$, where $S_{1} = \left\{\left(\overline{x}, 1, e'\right) : \overline{x} \in \mathbb{Z}_{n}\right\}$  and $
S_{2} = \left\{\left(\overline{x}, b^2, e'\right) : \overline{x} \in \mathbb{Z}_{n}\right\}$.

    First suppose $(\overline{x}_1, 1, e')\in S_1$ and let $(\overline{x},y,z)$ be an arbitrary element of $G$. By Lemma \ref{Cyc lemma}, both the elements $(\overline{x},y,z)$ and $(\overline{x}_1, 1, e')$ are contained in the cyclic subgroup generated by $(\overline{1},y,z)$. It follows that $\langle (\overline{x},y,z),(\overline{x}_1, 1, e')\rangle$ is cyclic. Consequently, $S_1\subseteq \mathrm{Cyc}(G)$.
    
    Now suppose $(\overline{x}_1, b^2, e') \in S_2$ and $(\overline{x},y,z)\in G$. Observe that these two elements belong to the cyclic subgroup generated by $(\overline{1},y',z)$, where $y' \in \mathcal{Q}_{2^{t+1}}$ such that $ y,b^2\in \langle y'\rangle$. Such an $y'$ exists because  $b^2$ is contained in every non-trivial cyclic subgroup of $\mathcal{Q}_{2^{t+1}}$. It follows that the subgroup $\langle (\overline{x}_1, b^2, e') , (\overline{x},y,z)\rangle$ is cyclic. Therefore, $S_2\subseteq \mathrm{Cyc}(G)$.

    Conversely, suppose $(\overline{x},y,z)\in \cy \setminus (S_1\cup S_2)$. It follows that either $y\notin \{1, b^2\}$ or $z\neq e'$. First assume that $y\notin \{1, b^2\}$. Therefore $o(y)> 2$. Let $g\in \langle y\rangle$ be such that $o(g)=4$. Note that $\mathfrak{m}_4({\mathcal{Q}_{2^{t+1}}})\geq 2$. Therefore there exists $g'\in \mathcal{Q}_{2^{t+1}}$ such that $o(g')=4$ and $\langle g \rangle \neq \langle g' \rangle$. Thus $\langle (\overline{0}, g, e')\rangle$ and $\langle (\overline{0}, g', e')\rangle$ are distinct cyclic subgroups of order $4$ in $G$. Also, note that  \linebreak[4] $(\overline{0}, g, e'), (\overline{0}, g', e') \in \langle (\overline{x},y,z), (\overline{0}, g', e')\rangle$. Therefore, the subgroup $ \langle (\overline{x},y,z), (\overline{0}, g', e')\rangle$ is non-cyclic, a contradiction to the fact that $(\overline{x},y,z)\in \cy$. 
    
    Now suppose $z\neq e'$.  Then there exists a prime $p$ such that $p\ \vert \  o(z)$. Consider $z'\in \langle z \rangle$ such that $o(z')=p$. Now there exists $z''\in \mathcal{G}$ such that $o(z'') = p$ and $\langle z' \rangle \neq \langle z'' \rangle$. Then $\langle (\overline{0}, 1, z') \rangle$ and $\langle (\overline{0}, 1, z'') \rangle$ are two distinct cyclic subgroups of  $\langle (\overline{x},y,z),(\overline{0}, 1, z'') \rangle$. It follows that $(\overline{x},y,z)\notin \cy$; a contradiction. Thus $\cy =S_1\cup S_2$.\\
    \textbf{Case III:} \( G = \mathbb{Z}_n \times P \times \mathcal{G} \), where \( \gcd(|\mathcal{G}|, n) = \gcd(n, 2) = 1 \), and \( P \in \{\mathbb{P}, \mathcal{D}_{2^{t+1}}, \mathcal{SD}_{2^{t+1}} \} \). Let $e_1$ be the identity element of $P$.  Suppose $(\overline{x}_1, e_1, e')\in \{(\overline{x}, e_1, e') : \overline{x} \in \mathbb{Z}_n\}$ and $(\overline{x}_2,y,z)\in G$.  As earlier, 
    $\langle (\overline{x}_1, e_1, e'), (\overline{x}_2,y,z)\rangle$ is a cyclic subgroup of $G$. Consequently, $\{(\overline{x}, e_1, e') : \overline{x} \in \mathbb{Z}_n\}\subseteq \cy$.

    Conversely, suppose $(\overline{x}, y ,z) \in \cy \setminus \{(\overline{x}, e_1, e') : \overline{x} \in \mathbb{Z}_n\}$. Then either $y\neq e_1$ or $z\neq e'$. First assume that $y\neq e_1$. Then there exists an element $y'\in \langle y \rangle$ such that $o(y')=2$. Consider an element $y''\in P$ such that $o(y'')=2$ and $\langle y' \rangle \neq \langle y'' \rangle$. Then $\langle (\overline{0}, y', e')\rangle$ and $\langle (\overline{0}, y'', e')\rangle$ are two distinct subgroups of $\langle (\overline{x}, y ,z) , (\overline{0}, y'', e')\rangle$, contradicting the assumption that $(\overline{x}, y ,z) \in \cy$. Similarly, we get a contradiction for the case $z\neq e'$ as well. Thus $\cy =\{(\overline{x}, e_1, e') : \overline{x} \in \mathbb{Z}_n\}$.
\end{proof}

\begin{lemma}{\label{multipartite Lemma}}
     Let $m \in \pi (G)$ such that $\mathfrak{m}_m(G)=s\geq 2$. Then the subgraph of $\c$ induced by  $\Omega_m(G)$ is isomorphic to the complete $s$-partite graph with exactly $\phi(m)$ vertices in each partition set. 
\end{lemma}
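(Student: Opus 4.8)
The plan is to identify the parts of the claimed multipartite structure with the equivalence classes of the relation $\mathcal{R}$ restricted to $\Omega_m(G)$. Recall that $g\,\mathcal{R}\,h$ holds precisely when $\langle g\rangle=\langle h\rangle$, so for elements of order $m$ the class of $g$ consists exactly of the generators of the cyclic group $\langle g\rangle$, of which there are $\phi(m)$. Since $\mathfrak{m}_m(G)=s$ counts the cyclic subgroups of order $m$, the set $\Omega_m(G)$ splits into $s$ such classes, say $V_1,\dots,V_s$, where $\langle a_i\rangle$ is the $i$-th cyclic subgroup of order $m$ and $V_i$ is its set of $\phi(m)$ generators. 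This already accounts for the right number of parts and the right part sizes; what remains is to verify the adjacency pattern.

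Before that, I would check that every element of $\Omega_m(G)$ is genuinely a vertex of $\c$, that is, lies in $G\setminus\mathrm{Cyc}(G)$, so that the induced subgraph is on all of $\Omega_m(G)$. Given $g\in\Omega_m(G)$, the hypothesis $s\geq 2$ furnishes a cyclic subgroup $\langle h\rangle$ of order $m$ distinct from $\langle g\rangle$; the cross-class computation below shows $\langle g,h\rangle$ is non-cyclic, so $h\notin\mathrm{Cyc}_G(g)$ and hence $g\notin\mathrm{Cyc}(G)$.

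For the adjacencies I would argue in two directions. First, two distinct elements $x,y$ of the same class $V_i$ satisfy $\langle x\rangle=\langle y\rangle=\langle a_i\rangle$, whence $\langle x,y\rangle=\langle a_i\rangle$ is cyclic and $x\nsim y$; thus each $V_i$ is an independent set. Second, for $x\in V_i$ and $y\in V_j$ with $i\neq j$, I claim $\langle x,y\rangle$ is non-cyclic. Suppose otherwise, so that $\langle x,y\rangle=\langle c\rangle$ is cyclic. Then $\langle x\rangle$ and $\langle y\rangle$ are both subgroups of order $m$ of the cyclic group $\langle c\rangle$; since a finite cyclic group has a unique subgroup of each order, $\langle x\rangle=\langle y\rangle$, contradicting $\langle a_i\rangle\neq\langle a_j\rangle$. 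Hence $x\sim y$ whenever $x$ and $y$ lie in different parts.

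Combining the two directions, the induced subgraph on $\Omega_m(G)$ has vertex partition $V_1,\dots,V_s$ with no edges inside any $V_i$ and all edges between distinct $V_i$ and $V_j$; this is exactly the complete $s$-partite graph with $\phi(m)$ vertices per part, as asserted. I do not anticipate a serious obstacle: the only structural input is the uniqueness of subgroups of a given order in a finite cyclic group, and the single point warranting care is the contrapositive argument in the cross-class case, which is where that uniqueness is used.
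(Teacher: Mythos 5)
Your proof is correct and follows essentially the same approach as the paper: the parts are the equivalence classes of $\mathcal{R}$ restricted to $\Omega_m(G)$, classes are independent, and elements of distinct classes are adjacent. You additionally spell out two points the paper leaves implicit (that $\Omega_m(G)\subseteq V(\c)$ when $s\geq 2$, and that cross-class adjacency follows from the uniqueness of subgroups of a given order in a finite cyclic group), which only strengthens the write-up.
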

\begin{proof}
    Let $x\in \Omega_m(G)$ be an arbitrary element. Let $[x]$ be the equivalence class of $\mathcal{R}$ containing $x$. From the definition of $\mathcal{R}$, it is clear that  $[x]$ is an independent set in $\c$. 

    Now suppose $y\in \Omega_m(G)$ such that $\langle x\rangle \neq \langle y \rangle$. Consider $x_1\in [x]$ and $y_1\in [y]$. Then $\langle x_1, y_1\rangle$ is non-cyclic, and so $x_1\sim y_1$ in $\c$. Thus the result follows.
\end{proof}

\begin{theorem}{\label{pgroup Hamiltonian}}
    Let $G=\mathbb{Z}_n\times P$, where $\mathrm{gcd}(n, |P|)=1$ and $P$ is $p$-group such that $P$ is neither cyclic nor a $2$-group of maximal class. Then $\Gamma(G)$ is Hamiltonian.
\end{theorem}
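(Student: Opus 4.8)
The plan is to strip off the cyclic direct factor using the known cyclizer calculation, reduce the statement to the prime-power factor $P$, and then build a Hamiltonian cycle of $\Gamma(P)$ by hand, using the abundance of subgroups of order $p$ as a skeleton.

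\emph{Reduction to $P$.} First I would read off $\mathrm{Cyc}(G)$ from Theorem \ref{Cyclizer nilpotent}. When $p$ is odd, $P$ is a non-cyclic group of odd order whose unique Sylow subgroup is non-cyclic, so $G=\mathbb{Z}_n\times P$ falls under the first case of that theorem and $\mathrm{Cyc}(G)=\{(\overline{x},e_P):\overline{x}\in\mathbb{Z}_n\}$; when $p=2$, $P$ is a $2$-group that is neither cyclic nor of maximal class, so $G$ falls under the $\mathbb{Z}_n\times\mathbb{P}$ case (with the $\mathcal{G}$-factor trivial) and again $\mathrm{Cyc}(G)=\{(\overline{x},e_P):\overline{x}\in\mathbb{Z}_n\}$. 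In either case $\mathrm{Cyc}(G)=\mathbb{Z}_n\times\{e_P\}$, so $G/\mathrm{Cyc}(G)\cong P$. By Lemma \ref{Abdollahi Hamiltonian Lemma} it therefore suffices to prove that $\Gamma(P)$ is Hamiltonian, i.e. the case $n=1$, where the same argument gives $\mathrm{Cyc}(P)=\{e\}$.

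\emph{Structure of $\Gamma(P)$.} The vertex set is then $P\setminus\{e\}$, and two vertices are adjacent exactly when they do not lie in a common cyclic subgroup. Two facts drive the construction. First, since $P$ is non-cyclic and not a $2$-group of maximal class, Theorem \ref{Subgroups theorem}(i) gives $\mathfrak{m}_p(P)\equiv 1+p\pmod{p^2}$, so $P$ has $s:=\mathfrak{m}_p(P)\ge 1+p\ge 3$ distinct subgroups of order $p$; by Lemma \ref{multipartite Lemma} the set $\Omega_p(P)$ of order-$p$ elements induces a complete $s$-partite graph with $\phi(p)=p-1$ vertices in each part, a highly connected Hamiltonian backbone. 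Second, if $x$ has order $p$ and $y\ne e$, then $\langle x,y\rangle$ is cyclic precisely when $x\in\langle y\rangle$, because a cyclic $p$-group has a unique subgroup of order $p$; hence every element $y$ with $o(y)>p$ is adjacent to all order-$p$ elements except the $p-1$ lying in its own cyclic subgroup.

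\emph{Construction and the main obstacle.} I would then assemble a Hamiltonian cycle of $\Gamma(P)$ by threading the elements of order greater than $p$ into the multipartite backbone on $\Omega_p(P)$, inserting each such element between two order-$p$ connectors drawn from subgroups of order $p$ different from the one it contains; the bound $s\ge 3$ guarantees that for any $y$ at least two of the $s$ parts of $\Omega_p(P)$ consist entirely of neighbours of $y$, giving the freedom to perform these insertions. Equivalently, one can contract each $\mathcal{R}$-class to a super-vertex and view $\Gamma(P)$ as a blow-up of the graph on cyclic subgroups in which the class of $\langle g\rangle$ has size $\phi(o(g))$; a Hamiltonian cycle then corresponds to a closed spanning walk on this quotient meeting the class of $\langle g\rangle$ exactly $\phi(o(g))$ times, whose existence is governed by the local balance inequalities $\phi(o(g))\le\deg_{\Gamma(P)}(g)$ (which follow from the subgroup counts) together with the connectivity of $\Gamma(P)$. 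I expect the main obstacle to be the bookkeeping for low-degree vertices: an order-$p$ element that is the common socle of many larger cyclic subgroups, such as $(2,0)$ in $\mathbb{Z}_4\times\mathbb{Z}_2$, is non-adjacent to all of those larger elements and has very small degree, so its few neighbours must be made consecutive to it in the cycle. The argument must verify that the $\ge 1+p$ subgroups of order $p$, combined with the divisibility counts of Theorem \ref{Subgroups theorem}(ii) for the orders $p^2,\dots,p^t$ and the intersection pattern of the maximal cyclic subgroups, always supply enough connectors to separate every pair of mutually non-adjacent elements and to absorb these bottleneck vertices; this routing verification is the technical heart of the proof.
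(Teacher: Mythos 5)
Your reduction to the case $G=P$ via Theorem \ref{Cyclizer nilpotent} and Lemma \ref{Abdollahi Hamiltonian Lemma} is correct and is exactly how the paper begins, and your two structural observations (the balanced complete $s$-partite graph on $\Omega_p(P)$ with $s\ge 1+p$, and the fact that $y$ with $o(y)>p$ is non-adjacent only to the $p-1$ order-$p$ elements inside $\langle y\rangle$) are also correct. The gap is in the construction itself. Your primary scheme --- ``inserting each such element between two order-$p$ connectors'' --- fails by a counting obstruction: in a Hamiltonian cycle each order-$p$ vertex has only two cycle-neighbours, so at most $s(p-1)$ elements of order greater than $p$ can have both cycle-neighbours of order $p$. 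Already for $P=\mathbb{Z}_{p^2}\times\mathbb{Z}_p$ one has $s(p-1)=p^2-1$ order-$p$ elements but $p^2(p-1)>p^2-1$ elements of order $p^2$, so the backbone has far too few slots; in general the higher-order elements must be routed through each other, not through $\Omega_p(P)$. Your fallback reformulation (contract $\mathcal{R}$-classes and seek a closed spanning walk subject to ``local balance inequalities $\phi(o(g))\le\deg_{\Gamma(P)}(g)$ together with connectivity'') is not a proof either: no cited or standard theorem converts those conditions into a Hamiltonian cycle of the blow-up, and you explicitly defer the ``routing verification,'' which is the entire content of the theorem.

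The idea you are missing, and which the paper uses, is that the layered structure does all the routing for free: \emph{within} a fixed order $p^j$, two elements are adjacent if and only if they generate distinct cyclic subgroups (a cyclic $p$-group has a unique subgroup of each order), so by Lemma \ref{multipartite Lemma} each $\Omega_{p^j}(P)$ induces a complete multipartite graph with equal parts of size $\phi(p^j)$, and Theorem \ref{Subgroups theorem} --- this is where the hypothesis that $P$ is not a $2$-group of maximal class enters --- guarantees at least two parts for every $2\le j\le \alpha$ and at least $1+p\ge 3$ parts for $j=1$. Hence every layer has a Hamiltonian path, and the paper simply concatenates these paths in decreasing order of element order $p^\alpha, p^{\alpha-1},\ldots,p$: the multiplicity $\mathfrak{m}_{p^j}(P)\ge 2$ always supplies a starting vertex of the next layer outside the cyclic subgroup generated by the previous terminal vertex (hence adjacent to it), and $\mathfrak{m}_p(P)\ge 3$ lets one choose the final order-$p$ endpoint outside $\langle x_1\rangle$ to close the cycle. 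In particular your ``bottleneck'' vertices such as $(2,0)\in\mathbb{Z}_4\times\mathbb{Z}_2$ cause no difficulty, since they are handled inside their own layer where they are adjacent to every element of equal order generating a different subgroup. Your proposal correctly assembles the ingredients (the cyclizer computation, Lemma \ref{multipartite Lemma}, the subgroup counts) but does not reach a valid construction from them.
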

\begin{proof}
    Let $e_1$ be the identity element of $P$. Clearly, $\cy = \mathbb{Z}_n\times \{e_1\}$. In view of Lemma \ref{Abdollahi Hamiltonian Lemma},  it is sufficient to show that $\Gamma (P)$ is Hamiltonian. Notice that $\mathrm{Cyc}(P)=\{e_1\}$. Let $\pi (G)=\{p, \ldots ,p^\alpha \}$. First, suppose that $\alpha =1$. Then $V(\Gamma(P))= \Omega _p(P)$. By Theorem \ref{Subgroups theorem} and Lemma \ref{multipartite Lemma}, $\Gamma(P)$ is a complete multipartite graph with exactly $p-1$ vertices in each partite set. Thus $\Gamma(P)$ is Hamiltonian. Now assume that $\alpha \geq 2$. 
    For $j\in\{1, \ldots ,\alpha\}$, let $\Gamma_{\alpha-j+1}$ be the subgraph of $\Gamma(P)$ induced by $\Omega_{p^{j}}(P)$. Note that $\Gamma_{\alpha-j+1}$ is a complete multipartite graph with the same number of vertices in each partite set. Therefore $\Gamma_{\alpha-j+1}$ is Hamiltonian for each $j$.

    Let $H_{1}$ be a Hamiltonian path in $\Gamma_{1}$ with initial and terminal vertices  $x_1$ and $y_1$, respectively. Theorem \ref{Subgroups theorem} guarantees an  $x_{2}\in \Omega_{p^{\alpha-1}}(P)$ such that $x_{2}\notin \langle y_1 \rangle$.  Now choose a Hamiltonian path $H_{2}$ in $\Gamma_{2}$ with initial vertex $x_{2}$. Let the terminal vertex of $H_{2}$ be $y_{2}$. Observe that $y_{1}\sim x_{2}$.  In general, for $1\leq j\leq \alpha-1$, suppose the Hamiltonian paths, $H_1,\ldots , H_j$ with initial vertices $x_1, \ldots , x_j$ and terminal vertices $y_1, \ldots ,y_j$, are obtained in the subgraphs $\Gamma_1, \ldots , \Gamma_j$, respectively, such that $x_i\notin \langle y_{i-1}\rangle$ for $2\leq i \leq j$. Now we choose $x_{j+1}\in \Omega_{p^{j+1}}(P)$ such that $x_{j+1}\notin \langle y_{j}\rangle$, and an Hamilotian path $H_{j+1}$ with initial vertex $x_{j+1}$ in $\Gamma_{j+1}$. Let the terminal vertex of $H_{j+1}$ be $y_{j+1}$. Observe that $y_j\sim x_{j+1}$ for $2\leq j \leq \alpha-1$. Therefore the paths $H_1, \ldots , H_\alpha$ altogether produce a Hamiltonian path $H$ in $\Gamma(P)$ with initial and terminal vertices $x_{1}$ and $y_\alpha$, respectively. Note that $\mathfrak{m}_p(P)\geq 3$. Therefore we can choose $y_\alpha$ in such a way that $y_\alpha \notin \langle x_1\rangle$, giving $y_\alpha\sim x_1$. Thus the Hamiltonian path $H$ along with the edge $\{x_1, y_\alpha \}$ produce a Hamiltonian cycle in $\Gamma(P)$. This completes the proof.
\end{proof}
The following example illustrates Theorem \ref{pgroup Hamiltonian}. It demonstrates the Hamiltonian cycle in the non-cyclic graph of $\mathbb{Z}_4 \times \mathbb{Z}_2$. Note that $\Omega_{4}(\mathbb{Z}_4 \times \mathbb{Z}_2)=\{(1,0), (3,0), (1,1), (3,1)\}$ and \linebreak[4] $\Omega_{2}(\mathbb{Z}_4 \times \mathbb{Z}_2)=\{(2,0), (0,1), (2,1)\}$.
 \begin{figure}[ht]
    \centering
    \includegraphics[width=0.45\textwidth]{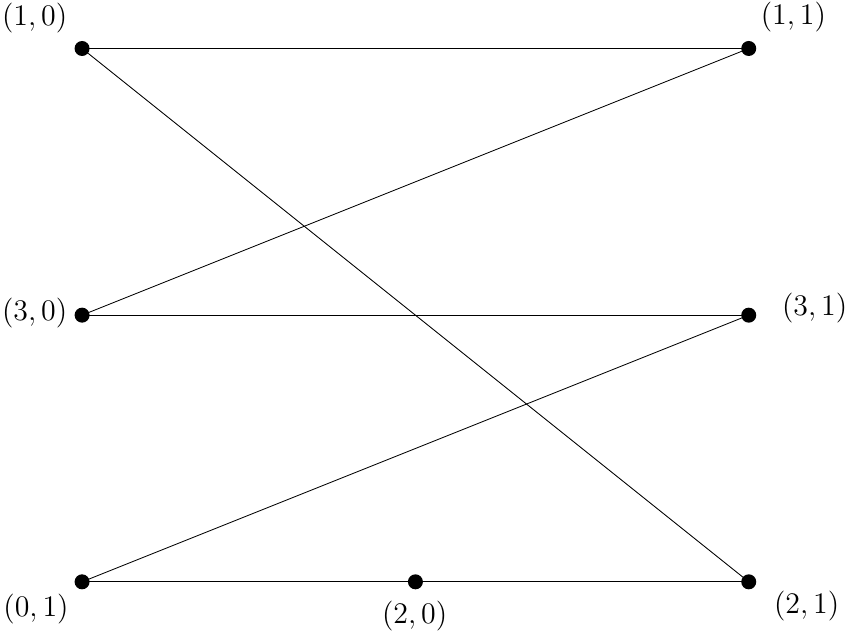}
    \caption{Hamiltonian cycle in $\Gamma(\mathbb{Z}_4 \times \mathbb{Z}_2)$.}
    \label{Z2Z4}
\end{figure}
\begin{lemma}{\label{m_d>2 G}}
Let $P$ be a $2$-group. Then $\mathfrak{m}_{o(g)}(P \times \mathcal{G})\geq 3$ for $g\in \{(y,z )\in P \times \mathcal{G}: z\neq e'\}$.
\end{lemma}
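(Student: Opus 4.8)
The plan is to reduce the count $\mathfrak{m}_{o(g)}(P\times\mathcal{G})$ to a product of cyclic‑subgroup counts over the Sylow factors, and then to extract the bound $\geq 3$ from a single \emph{odd} Sylow factor via Theorem \ref{Subgroups theorem}. Write $g=(y,z)$ with $z\neq e'$, and set $2^{a}=o(y)$ (a power of $2$, possibly $1$) and $m=o(z)$. Since $y\in P$ and $z\in\mathcal{G}$ lie in groups of coprime order, $o(g)=\mathrm{lcm}(2^{a},m)=2^{a}m$, and $m$ is odd with $m>1$ because $z\neq e'$.

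First I would record the multiplicativity of the cyclic‑subgroup count over coprime direct factors. If $\gcd(|A|,|B|)=1$ and $d=d_A d_B$ with $d_A\mid|A|$, $d_B\mid|B|$, then every cyclic subgroup of $A\times B$ of order $d$ is a product $C_A\times C_B$ with $C_A\leq A$ cyclic of order $d_A$ and $C_B\leq B$ cyclic of order $d_B$; the splitting is unique (take intersections with $A\times\{e_B\}$ and $\{e_A\}\times B$), and conversely any such product is cyclic of order $d$. This yields a bijection and hence $\mathfrak{m}_{d}(A\times B)=\mathfrak{m}_{d_A}(A)\,\mathfrak{m}_{d_B}(B)$. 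Applying it to $P\times\mathcal{G}$ and then to the Sylow decomposition $\mathcal{G}=Q_{p_1}\cdots Q_{p_r}$, with $m=\prod_i p_i^{c_i}$, I obtain
\[
\mathfrak{m}_{o(g)}(P\times\mathcal{G})=\mathfrak{m}_{2^{a}}(P)\prod_{i}\mathfrak{m}_{p_i^{c_i}}(Q_{p_i}).
\]
Because $\langle y\rangle$ and the $p_i$‑parts of $\langle z\rangle$ exhibit cyclic subgroups of the required orders, every factor on the right is at least $1$.

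Next I would bound one odd factor from below. Fix a prime $p\mid m$ and let $p^{c}$ be the exact power of $p$ dividing $m$, so $c\geq1$; since $\langle z\rangle$ contains an element of order $p^{c}$, we have $p^{c}\leq \exp(Q_p)$, i.e. $c\leq t$. The Sylow $p$‑subgroup $Q_p$ is non‑cyclic by the standing hypothesis on $\mathcal{G}$, and as $p$ is odd it is certainly not a $2$‑group of maximal class, so Theorem \ref{Subgroups theorem} applies. If $c=1$, part (i) gives $\mathfrak{m}_{p}(Q_p)\equiv 1+p\pmod{p^{2}}$, and positivity forces $\mathfrak{m}_{p}(Q_p)\geq 1+p\geq 4$. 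If $c\geq2$, part (ii) gives $p\mid\mathfrak{m}_{p^{c}}(Q_p)$, so $\mathfrak{m}_{p^{c}}(Q_p)\geq p\geq 3$. In either case $\mathfrak{m}_{p^{c}}(Q_p)\geq 3$, and therefore
\[
\mathfrak{m}_{o(g)}(P\times\mathcal{G})\geq \mathfrak{m}_{2^{a}}(P)\,\mathfrak{m}_{p^{c}}(Q_p)\geq 1\cdot 3=3,
\]
which is the desired inequality. The only substantive step is the invocation of Theorem \ref{Subgroups theorem}; the remainder is bookkeeping. The points to be careful about are the uniqueness of the coprime splitting in the multiplicativity bijection, and the remark that oddness of $p$ automatically rules out the maximal‑class exception, so no hypothesis on the structure of $P$ itself is ever needed.
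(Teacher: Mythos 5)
Your proof is correct, and it takes a somewhat different route from the paper's, although both rest on the same engine. The paper argues by direct construction: writing $\mathcal{G}=P_1\times\cdots\times P_k$ and $z=(z_1,\ldots,z_k)$, it picks a coordinate $i$ with $o(z_i)=p_i^{\ell}$, $\ell\geq 1$, invokes Theorem \ref{Subgroups theorem} to produce $z_i',z_i''\in P_i$ with $\langle z_i\rangle,\langle z_i'\rangle,\langle z_i''\rangle$ pairwise distinct of equal order, and replaces the $i$-th coordinate of $z$ to obtain $g'=(y,z')$ and $g''=(y,z'')$; the subgroups $\langle g\rangle,\langle g'\rangle,\langle g''\rangle$ are then pairwise distinct because they contain distinct cyclic subgroups of order $p_i^{\ell}$. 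Your multiplicativity formula $\mathfrak{m}_{d}(A\times B)=\mathfrak{m}_{d_A}(A)\,\mathfrak{m}_{d_B}(B)$ for coprime direct factors is, in effect, the bijective completion of this one-coordinate replacement trick: the paper uses only the injection (three choices in one odd Sylow factor, all other coordinates held fixed, yield three distinct cyclic subgroups), whereas you count exactly and then bound a single factor. Both proofs hinge on applying Theorem \ref{Subgroups theorem} to a non-cyclic odd Sylow subgroup of $\mathcal{G}$, and your explicit dichotomy --- $c=1$ forces $\mathfrak{m}_{p}(Q_p)\geq 1+p\geq 4$ by positivity, while $c\geq 2$ gives $p\mid \mathfrak{m}_{p^{c}}(Q_p)$ together with $\mathfrak{m}_{p^{c}}(Q_p)\geq 1$, hence $\mathfrak{m}_{p^{c}}(Q_p)\geq p\geq 3$ --- actually fills in a step the paper leaves implicit when it simply asserts that $z_i'$ and $z_i''$ exist. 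What your route buys: an exact product formula rather than a mere bound, and as an immediate by-product the companion Lemma \ref{m_d>2 P G} (for $g=(y,z)$ with $z=e'$ and $y\neq e''$ one instead bounds the factor $\mathfrak{m}_{2^{a}}(\mathbb{P})$, which is at least $2$ by Theorem \ref{Subgroups theorem} since $\mathbb{P}$ is neither cyclic nor of maximal class). What the paper's route buys: brevity, since no bijection and no uniqueness of the coprime splitting need to be verified.
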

\begin{proof}
    Let $g=(y,z )\in P \times \mathcal{G}$ be such that $z\neq e'$. Since $\mathcal{G}$ is nilpotent, $\mathcal{G}$ can be written as a direct product of its Sylow subgroups. For $j\in\{1,\ldots , k \}$, suppose $P_j$ be the Sylow $p_j$-subgroup of $\mathcal{G}$ such that $\mathcal{G}=P_1\times \cdots \times P_k$. Now write $z$ as $(z_1, \ldots , z_k)$. Then there exists at least one  $i\in \{1, \ldots , k\}$ such that $o(z_i)=p_i^\ell$, where $\ell\geq 1$. By Theorem \ref{Subgroups theorem}, there exists $z_i', z_i''\in P_i$ such that $o(z_i)=o(z_i')=o(z_i'')$, and the cyclic subgroups $\langle z_i\rangle, \langle z_i'\rangle$ and  $\langle z_i''\rangle$ are pairwise distinct. Replacing the $i$-th entry $z_i$ in $z$ by $z_i'$ and $z_i''$, respectively we write $z'=(z_1, \ldots , z_i', \ldots , z_k)$ and $z''=(z_1,  \ldots , z_i'',  \ldots , z_k)$ in $\mathcal{G}$. Let $g'=( y,z' )$ and $g''= ( y,z'')$. Then $o(g)=o(g')=o(g'')$. Also, $\langle g\rangle$, $\langle g' \rangle$ and $\langle g''\rangle$ are three pairwise distinct cyclic subgroups of $P \times \mathcal{G}$ as they contain three distinct cyclic subgroups of $p_i^\ell$. Hence $\mathfrak{m}_{o(g)}(G)\geq 3$.   
\end{proof}
The proof of the following lemma is similar to the proof of Lemma \ref{m_d>2 G}. Hence the proof is omitted.
\begin{lemma}{\label{m_d>2 P G}}
    Let $G=  \mathbb{P} \times \mathcal{G}$. Then $\mathfrak{m}_{o(g)}(G)\geq 2$ for all $g\in G\setminus \cy$.
\end{lemma}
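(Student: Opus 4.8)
The plan is to split on the $\mathcal{G}$-component of $g$ and reduce to the two facts already in hand. First I would pin down the cyclizer: applying Theorem \ref{Cyclizer nilpotent} with $n=1$ to $G = \mathbb{P}\times\mathcal{G}$ shows that $\cy = \{(e'',e')\}$ is trivial, so $G\setminus\cy$ is precisely the set of non-identity elements. Hence every $g = (y,z) \in G\setminus\cy$ satisfies $z\neq e'$ or $y\neq e''$, and these two possibilities organize the argument.

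If $z\neq e'$, then $g$ lies in the set to which Lemma \ref{m_d>2 G} applies, with the $2$-group $\mathbb{P}$ playing the role of $P$. That lemma immediately gives $\mathfrak{m}_{o(g)}(G)\geq 3 \geq 2$, so there is nothing further to do in this case.

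The substantive case is $z = e'$, which forces $y\neq e''$ and hence $o(g) = o(y) = 2^j$ for some $j\geq 1$. Here I would exploit that $\mathbb{P}$ is non-cyclic and not of maximal class, so Theorem \ref{Subgroups theorem} yields $\mathfrak{m}_2(\mathbb{P})\equiv 1+2 \pmod{4}$, hence $\geq 3$, when $j=1$, and $2\mid\mathfrak{m}_{2^j}(\mathbb{P})$, hence $\geq 2$, when $2\leq j\leq t$, where $2^t = \mathrm{exp}(\mathbb{P})$. In either subcase there is a second element $y'\in\mathbb{P}$ with $o(y')=o(y)$ and $\langle y'\rangle\neq\langle y\rangle$, so $\langle(y,e')\rangle$ and $\langle(y',e')\rangle$ are two distinct cyclic subgroups of $G$ of order $o(g)$, giving $\mathfrak{m}_{o(g)}(G)\geq 2$.

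I do not expect a genuine obstacle: the proof is essentially the bookkeeping of Lemma \ref{m_d>2 G} run once more, and its only conceptual point is explaining why the bound drops from $3$ to $2$. That drop is localized entirely in the subcase $z=e'$ with $j\geq 2$, where excluding maximal class only forces $\mathfrak{m}_{2^j}(\mathbb{P})$ to be even rather than at least $3$; this is exactly the subcase that makes the lower bound $2$ sharp, and I would flag it explicitly rather than burying it in the phrase ``similar to Lemma \ref{m_d>2 G}.''
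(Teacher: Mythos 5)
Your proof is correct and is essentially the argument the paper intends: it omits the proof as ``similar to Lemma \ref{m_d>2 G}'', and your two cases (citing Lemma \ref{m_d>2 G} when the $\mathcal{G}$-component is nontrivial, and applying Theorem \ref{Subgroups theorem} to $\mathbb{P}$ when $g=(y,e')$ with $o(y)=2^j$) are exactly that adaptation. Your added observation that the bound weakens to $2$ precisely because excluding maximal class only forces $2\mid \mathfrak{m}_{2^j}(\mathbb{P})$ for $j\geq 2$ is accurate (e.g.\ $\mathfrak{m}_4(\mathbb{Z}_4\times\mathbb{Z}_2)=2$) and explains why the lemma's constant is sharp.
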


\begin{lemma}{\label{Hamiltonian path lemma}}
    Let $G=  \mathbb{P} \times \mathcal{G}$ and let $x, y \in \Omega _m(G)$ such that $\langle x \rangle \neq \langle y \rangle$. Then there exists a Hamiltonian path from $x$ to $y$ in the subgraph of $\c$ induced by $\Omega_m(G)$.
\end{lemma}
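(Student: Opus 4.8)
The plan is to reduce the statement to a purely combinatorial fact about balanced complete multipartite graphs and then settle it by writing down an explicit ordering of the vertices. First I would invoke Lemma \ref{multipartite Lemma}: setting $s=\mathfrak{m}_m(G)$, the subgraph $\Gamma_m$ of $\c$ induced by $\Omega_m(G)$ is the complete $s$-partite graph whose partition sets are the $\mathcal{R}$-classes $[g]=\Omega_m(G)\cap\langle g\rangle$, each of the common size $\phi(m)$. The hypothesis $\langle x\rangle\neq\langle y\rangle$ says precisely that $x$ and $y$ lie in two distinct partition sets, and in particular forces $s\ge 2$ (this also follows from Lemma \ref{m_d>2 P G}). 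Hence it suffices to prove: in a complete $s$-partite graph with $s\ge 2$ parts, each of the common size $q=\phi(m)$, there is a Hamiltonian path joining any two vertices that lie in distinct parts.

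For this I would argue by direct construction. Relabel the partition sets as $B_1,\dots,B_s$ so that $x\in B_1$ and $y\in B_s$ (possible since $x$ and $y$ lie in different classes and $s\ge 2$, with the remaining $s-2$ classes assigned to $B_2,\dots,B_{s-1}$ in any order), and within each class $B_i$ enumerate its vertices as $b_{i,1},\dots,b_{i,q}$, choosing the labels so that $b_{1,1}=x$ and $b_{s,q}=y$. Then list the vertices ``column by column'':
\[
b_{1,1},b_{2,1},\dots,b_{s,1},\; b_{1,2},b_{2,2},\dots,b_{s,2},\;\dots,\; b_{1,q},b_{2,q},\dots,b_{s,q}.
\]
This sequence is a permutation of $\Omega_m(G)$ beginning at $x$ and ending at $y$.

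It then remains to check that consecutive entries are adjacent in $\Gamma_m$, i.e.\ that they lie in different partition sets. Exactly two kinds of consecutive pairs occur: a within-column pair $b_{i,k},b_{i+1,k}$, lying in $B_i,B_{i+1}$ with $1\le i\le s-1$, and a column-seam pair $b_{s,k},b_{1,k+1}$, lying in $B_s,B_1$. In both cases the two indices differ, so the parts differ (the seam is where $s\ge 2$ is used). By Lemma \ref{multipartite Lemma} every such pair is an edge of $\Gamma_m$, so the displayed list is a Hamiltonian path from $x$ to $y$.

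I expect no deep obstacle here: because all parts share the common size $\phi(m)$, the balance condition usually needed for Hamiltonicity of complete multipartite graphs is automatic. The only point requiring a little care is the relabelling, namely arranging the prescribed endpoints $x$ and $y$ to land at the two ends of the column ordering while keeping every column seam joining distinct parts; this is exactly the bookkeeping step where the hypothesis $s=\mathfrak{m}_m(G)\ge 2$ (guaranteed by $\langle x\rangle\neq\langle y\rangle$) enters.
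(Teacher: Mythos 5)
Your proposal is correct and follows the same route as the paper, which disposes of this lemma in one line by citing Lemma \ref{m_d>2 P G} (to get $\mathfrak{m}_m(G)\ge 2$) and Lemma \ref{multipartite Lemma} (to identify the induced subgraph as a balanced complete multipartite graph). Your explicit column-by-column ordering simply supplies the standard construction that the paper leaves implicit, and it checks out: every consecutive pair, including the column seams, lies in distinct parts, and the endpoints can be placed at $x$ and $y$ because they lie in different parts.
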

\begin{proof}
    The result directly follows from Lemma \ref{m_d>2 P G} and Lemma \ref{multipartite Lemma}.
\end{proof}
    
\begin{theorem}{\label{ZnPG Hamiltonian}}
    Let  $G= \mathbb{Z}_n  \times \mathbb{P} \times  \mathcal{G}$. Then $\c$ is Hamiltonian.
\end{theorem}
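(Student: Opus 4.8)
The plan is to reduce the problem, via the central cyclizer quotient, to a direct product with no cyclic $\mathbb{Z}_n$-factor, and then to build a Hamiltonian cycle by concatenating Hamiltonian paths coming from the order classes, exactly in the spirit of Theorem \ref{pgroup Hamiltonian}. By Theorem \ref{Cyclizer nilpotent} we have $\mathrm{Cyc}(G)=\mathbb{Z}_n\times\{e''\}\times\{e'\}$, so $G/\mathrm{Cyc}(G)\cong\mathbb{P}\times\mathcal{G}$. In view of Lemma \ref{Abdollahi Hamiltonian Lemma}, it therefore suffices to prove that $\Gamma(H)$ is Hamiltonian for $H:=\mathbb{P}\times\mathcal{G}$. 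Setting $n=1$ in Case III of Theorem \ref{Cyclizer nilpotent} gives $\mathrm{Cyc}(H)=\{(e'',e')\}$, so $V(\Gamma(H))=H\setminus\{(e'',e')\}$ and every nonidentity element is a vertex.

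First I would record an adjacency criterion on $H$. Since $|\mathbb{P}|$ and $|\mathcal{G}|$ are coprime, a subgroup $\langle(y,z),(y',z')\rangle$ is cyclic if and only if both projections $\langle y,y'\rangle\leq\mathbb{P}$ and $\langle z,z'\rangle\leq\mathcal{G}$ are cyclic: the forward direction is clear since projections of cyclic groups are cyclic, and conversely a product of two cyclic groups of coprime order is cyclic, and every subgroup of a cyclic group is cyclic. Hence $(y,z)\sim(y',z')$ in $\Gamma(H)$ exactly when $\langle y,y'\rangle$ is non-cyclic in $\mathbb{P}$ \emph{or} $\langle z,z'\rangle$ is non-cyclic in $\mathcal{G}$. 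This criterion is what lets me control when endpoints of different order classes can be joined.

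Next, I would partition $V(\Gamma(H))$ into the order classes $\Omega_m(H)$, $m\in\pi(H)$. By Lemma \ref{m_d>2 P G} every such class satisfies $\mathfrak{m}_m(H)\geq 2$, so by Lemma \ref{multipartite Lemma} it induces a complete multipartite graph with at least two parts, and Lemma \ref{Hamiltonian path lemma} then supplies, for any two vertices in distinct parts, a Hamiltonian path of $\Omega_m(H)$ between them. The construction then mirrors Theorem \ref{pgroup Hamiltonian}: fix an enumeration $\Gamma_1,\dots,\Gamma_k$ of these induced subgraphs, choose in each $\Gamma_i$ a Hamiltonian path $Q_i$ with endpoints $x_i,y_i$ generating distinct cyclic subgroups, concatenate $Q_1,\dots,Q_k$ into one spanning path by arranging $y_i\sim x_{i+1}$ at every junction, and close the cycle with the edge $\{y_k,x_1\}$.

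The main obstacle is precisely this routing: guaranteeing a globally consistent choice of the endpoints $x_i,y_i$ so that every junction $y_i\sim x_{i+1}$ holds and the closing edge $y_k\sim x_1$ exists simultaneously. The room needed comes from the counting. Each class has at least two cyclic subgroups, so its two endpoints can always be placed in distinct parts; each class with nontrivial $\mathcal{G}$-component has at least three by Lemma \ref{m_d>2 G}; and $\mathbb{P}$ has at least three involutions, since being neither cyclic nor of maximal class forces $\mathfrak{m}_2(\mathbb{P})\equiv 1+2\pmod 4$ by Theorem \ref{Subgroups theorem}. Using the adjacency criterion, I would order the classes so that consecutive ones always admit adjacent endpoints—for instance treating the pure $2$-element vertices as a block through the Hamiltonicity of $\Gamma(\mathbb{P})$ given by Theorem \ref{pgroup Hamiltonian} (with $n=1$), and attaching the mixed classes through their $\mathbb{P}$- or $\mathcal{G}$-projections—and I would reserve the spare cyclic subgroup furnished by $\mathfrak{m}_2(\mathbb{P})\geq 3$ and by Lemma \ref{m_d>2 G} to liberate the final endpoint, exactly as the bound $\mathfrak{m}_p(P)\geq 3$ was used to close the cycle in Theorem \ref{pgroup Hamiltonian}. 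Verifying that all these endpoint choices can be made at once, together with the delicate closure step, is where the real work lies.
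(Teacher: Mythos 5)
Your opening moves match the paper exactly: the reduction to $H=\mathbb{P}\times\mathcal{G}$ via Theorem \ref{Cyclizer nilpotent} and Lemma \ref{Abdollahi Hamiltonian Lemma}, the projection criterion for adjacency, and the per-class structure coming from Lemmas \ref{m_d>2 P G}, \ref{multipartite Lemma} and \ref{Hamiltonian path lemma} are all correct. But your proposal stops precisely where the paper's proof begins in earnest, and you concede as much (``where the real work lies''). This is not a routine verification that can be waved through, and in fact the blueprint you commit to --- one Hamiltonian path per order class $\Omega_m(H)$, chained along a single enumeration $\Gamma_1,\dots,\Gamma_k$ and closed by one edge $\{y_k,x_1\}$ --- is unrealizable in general. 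By Theorem \ref{nilpotent}(iv), elements of coprime orders in a nilpotent group commute, so they generate a cyclic subgroup; hence two classes $\Omega_m$ and $\Omega_{m'}$ with $\gcd(m,m')=1$ have \emph{no} edges between them whatsoever. Take $\mathbb{P}=\mathbb{Z}_2\times\mathbb{Z}_2\times\mathbb{Z}_2$ and $\mathcal{G}=\mathbb{Z}_p\times\mathbb{Z}_p$ for an odd prime $p$: then $\pi(H)=\{2,p,2p\}$, and in any cyclic arrangement of the three classes every pair of classes is consecutive somewhere, so either a junction or the closing edge must join $\Omega_2$ to $\Omega_p$ --- impossible. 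Your reserve of spare cyclic subgroups (from $\mathfrak{m}_2(\mathbb{P})\geq 3$ and Lemma \ref{m_d>2 G}) only governs \emph{which} endpoint to pick inside a class; it cannot manufacture an edge between two classes that are mutually non-adjacent.

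The paper deals with exactly this obstruction in two ways that your sketch lacks. First, it isolates $\mathrm{exp}(G')=p_1p_2$ as a separate case and there abandons the one-path-per-class format: the mixed class $\Omega_{p_1p_2}$ is split into two pieces ($\Gamma_2$, induced by two chosen cyclic subgroups, and $\Gamma_4$, the remaining ones), and the cycle visits it twice, in the order $\Omega_{p_1}\to\Gamma_2\to\Omega_{p_2}\to\Gamma_4\to$ back to the start, so that the coprime classes $\Omega_{p_1}$ and $\Omega_{p_2}$ are never consecutive. Second, in the general case the enumeration is made completely explicit: the orders are grouped into blocks $\mathcal{B}_{i,j}=\{p_i^jp_{i+1}^{\gamma_{i+1}}\cdots p_k^{\gamma_k}\}$ by leading prime power, so that consecutive order classes always share a common prime-power factor, and Claims I--IV of the paper verify each type of junction (inside a block; from $\mathcal{B}_{i,j}$ down to $\Omega_{p_i^{j-1}}$ at the start of $\mathcal{B}_{i,j-1}$; and from $\mathcal{B}_{i,1}$ to $\mathcal{B}_{i+1,\alpha_{i+1}}$ via $\Omega_{p_{i+1}^{\alpha_{i+1}}}$). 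Even the closing edge requires a dedicated device: the order $p_1^{\alpha_1}p_k$ is deliberately listed \emph{first} in $\mathcal{B}_{1,\alpha_1}$, so the starting vertex has order divisible by $p_k$ and the final vertex $y_{k,1}\in\Omega_{p_k}$ can be chosen outside $\langle x_{1,\alpha_1}\rangle$ using $\mathfrak{m}_{p_k}(G')\geq 3$. Supplying this ordering, the junction claims, and the splitting trick for the degenerate exponent is the actual content of the theorem; as it stands, your proposal is a sound plan of attack whose central step, taken literally, fails.
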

\begin{proof}

    In view of Lemma \ref{Abdollahi Hamiltonian Lemma} and Theorem \ref{pgroup Hamiltonian}, we prove that $\Gamma (\mathbb{P} \times \mathcal{G})$ is Hamiltonian, where $\mathcal{G}$ is non-trivial. Let $P_1,P_2, \ldots , P_k$ be all the Sylow $p_i$-subgroups of $G'$ such that $p_1>p_2>\cdots >p_k$, where $G'=\mathbb{P} \times \mathcal{G}$. Note that $G'=P_1\times P_2\times \cdots \times P_k$, where $P_k=\mathbb{P}$. Suppose $\mathrm{exp}(G')=p_1^{\alpha_1}p_2^{\alpha_2} \cdots p_k^{\alpha_k}$.  
   \textbf{Case 1:} First consider that $\mathrm{exp}(G')=p_1p_2$. Notice that  $\pi(G')=\{p_1,p_2,p_1p_2\}$. Applying Theorem \ref{Subgroups theorem} on the Sylow $p_i$-subgroups of $G'$, we get $\mathfrak{m}_{p_1}(G')\geq p_1+1$ and $\mathfrak{m}_{p_2}(G')\geq p_2+1$. It implies that $\mathfrak{m}_{p_1p_2}(G')=s \geq (p_1+1)(p_2+1)$. Let $\Gamma_1$ be the subgraph of $\Gamma(G')$ induced by $\Omega _{p_1}(G')$. Note that $\Gamma_1$ is a complete multipartite graph with $p_1-1$ elements in each partite set. Therefore $\Gamma_1$ is Hamiltonian. 

   Let $H_1$ be a Hamiltonian path in $\Gamma_1$ with initial and terminal vertices $x_1$ and $x_2$, respectively.  Consider $y_1, y_2\in \Omega_{p_1p_2}(G')$ such that $x_2\notin \langle y_1 \rangle$ and $\langle y_1\rangle \neq \langle y_2 \rangle $. It follows that $\langle x_2,y_1\rangle$ is non-cyclic and so $x_2\sim y_1$. Let $\Gamma_2$ be the subgraph of $\Gamma(G')$ induced by $(\langle y_1\rangle \cup \langle y_2\rangle)\cap \Omega_{p_1p_2}(G')$. Observe that $\Gamma_2$ is a complete bipartite graph with $(p_1-1)(p_2-1)$ elements in each partite set. Let $H_2$ be a Hamiltonian path in $\Gamma_2$ with initial and terminal vertices $y_1$ and $y_2$, respectively. Consider $z_1, z_2\in \Omega_{p_2}$ such that $z_1\notin \langle y_2\rangle$ and $\langle z_1\rangle \neq \langle z_2 \rangle$. Let $\Gamma_3$ be the subgraph of $\Gamma(G')$ induced by $\Omega _{p_2}(G')$. Note that $\Gamma_3$ is a complete multipartite graph with $p_2-1$ elements in each partite set. Let $H_3$ be a Hamiltonian path in $\Gamma_3$ with initial and terminal vertices $z_1$ and $z_2$, respectively. 
   Let $y_3, y_4, \ldots, y_s \in \Omega_{p_1p_2}(G')$ such that $\langle y_i\rangle \neq \langle y_j \rangle$ for distinct $i,j \in \{1,2,3,\ldots ,s\}$. We choose $y_3$ and $y_s$ in such a way that $z_2\notin \langle y_3 \rangle$ and $x_1\notin \langle y_s \rangle$. For example, one can choose $y_3$ to be an element of order $p_1p_2$ in $\langle x_1,z_1\rangle$.
    Let $\Gamma_4$ be the subgraph of $\Gamma (G')$ induced by the set $(\bigcup\limits_{i=3}^s \langle y_i\rangle) \cap \Omega_{p_1p_2}(G')$. Observe that $\Gamma_4$ is a complete $(s-2)$-partite graph with exactly $(p_1-1)(p_2-1)$ elements in each partite. Thus, we have a Hamiltonian path $H_4$ from $y_3$ to $y_s$. Note that for each $i\in \{1,2,3\}$, the terminal vertex of $H_i$ is adjacent with the initial vertex of $H_{i+1}$. Also, $x_1\notin \langle y_s \rangle$ implies that $y_s \sim x_1$. Consequently, we get a Hamiltonian cycle in $\Gamma(G')$. Hence $\Gamma(G')$ is Hamiltonian.\\
    \textbf{Case 2:} Now consider $\mathrm{exp}(G')\neq p_1p_2$. 
    For $1\leq i \leq k-1$ and $1\leq j \leq \alpha_i$ define $$\mathcal{B}_{i,j}=\{p_i^jp_{i+1}^{\gamma _{i+1}}\cdots p_k^{\gamma_k} : 0\leq \gamma_r \leq \alpha_r \text{ for } i+1\leq r\leq k\} ,$$
     and for $1\leq j \leq \alpha_k$ define 
    $$\mathcal{B}_{k,j}=\{p_k^{j} \}.$$
    Then $\pi(G)= \mathcal{B}_{1,\alpha_{1}}\cup \mathcal{B}_{1,\alpha_1-1} \cup \cdots \cup \mathcal{B}_{1, 1}\cup \mathcal{B}_{2,\alpha_2}\cup \cdots \cup \mathcal{B}_{k,1}$. We arrange all the elements of these sets in ascending order except for $\mathcal{B}_{1,\alpha_{1}}$. In $\mathcal{B}_{1,\alpha_{1}}$, we list the element $p_1^{\alpha_1}p_k$ at the beginning, and the remaining elements in ascending order afterwards. Thus, $$\pi(G)=\{\underbrace {p_1^{\alpha_1}p_k, \  p_1^{\alpha_1},  \ldots , \  p_1^{\alpha_1}p_2^{\alpha_2}\cdots p_k^{\alpha_k}}_{\mathcal{B}_{1,\alpha_1}} \ , \ldots ,  \ \underbrace{p_k}_{\mathcal{B}_{k,1}}\}.$$ Before proceeding further, we first prove the following claims.\\
    \textbf{Claim I:} Let $x\in G'$ such that $o(x)\in \mathcal{B}_{i,j}$. Then for any $m\in \mathcal{B}_{i,j}$, there exists a $y\in \Omega _m(G)$ such that $\langle x, y\rangle$ is not cyclic. \\
    \textit{Proof of the claim.}
    Suppose $x=(x_1,\ldots, x_i,  \ldots , x_k)$ such that $o(x)\in \mathcal{B}_{i,j}$.
    
 First, assume that $i=1$. Observe that $o(x_1)=p_1^{j}$. By Theorem \ref{Subgroups theorem}, there exists $y_1\in P_1$ such that $o(y_1)=p_1^{j}$ and $\langle x_1\rangle \neq \langle y_1\rangle$. Let  $m= p_1^{j} p_{2}^{\delta_{2}}\cdots p_k^{\delta_k}\in \mathcal{B}_{1,j}$. For $2\leq r\leq  k$, consider $y_r\in P_r$ such that $o(y_r)=p_r^{\delta_r}$. Then the element $y=(y_1,y_2, \ldots   , y_k)$ belongs to the set $\Omega _m(G)$. Also, notice that $\langle x , y \rangle$ is a non-cyclic group as it contains two distinct cyclic subgroups of order $p_1^{j}$. 
 
 Now suppose $1<i<k$. In this case, note that $x_\ell=e'$ for $1\leq \ell\leq i-1$. Also, observe that $o(x_i)=p_i^{j}$. By Theorem \ref{Subgroups theorem}, there exists $y_i\in P_i$ such that $o(y_i)=p_i^{j}$ and $\langle x_i\rangle \neq \langle y_i \rangle$. Let $m= p_i^{j} p_{i+1}^{\delta_{i+1}}\cdots p_k^{\delta_k}\in \mathcal{B}_{i,j}$. For $i+1\leq r\leq  k$, consider $y_r\in P_r$ such that $o(y_r)=p_r^{\delta_r}$ . Then the element $y=(e', \ldots  ,e',y_i, \ldots , y_k)$ belongs to the set $\Omega _m(G)$. Also, $\langle x , y \rangle$ is a non-cyclic group. 

 Finally, assume that $i=k$. In this case, note that $x_\ell=e'$ for $1\leq \ell\leq k-1$ and $o(x_k)=p_k^{j}$. By Theorem \ref{Subgroups theorem}, there exists $y_k\in P_k$ such that $o(y_k)=p_k^{j}$ and $\langle x_k\rangle \neq \langle y_k \rangle$. Let $m= p_k^{j}$. Then the element $y=(e', \ldots  ,e', y_k)$ belongs to the set $\Omega _m(G)$. Also,  $\langle x , y \rangle$ is a non-cyclic group. This proves the claim.
 
For $1\leq i \leq k$ and $1\leq \beta \leq \alpha_i$, let $\Gamma_{i,\beta}$ be the subgraph of $\Gamma(G')$ induced by  $\{z\in G' : o(z)\in \mathcal{B}_{i,\beta}\}$. 
    \textbf{Claim II:} For $1\leq i \leq k-1$   there exists a Hamiltonian path $H_{i,\beta}$ in $\Gamma_{i,\beta}$ from $x\in \Omega_{p_i^\beta}(G')$ to some $y\in \Omega_{p_i^\beta p_{i+1}^{\alpha_{i+1}}\cdots p_k^{\alpha_k}}(G')$.\\
 \textit{Proof of the claim.} Let $\mathcal{B}_{i,\beta}=\{m_1,\ldots , m_\mu \}$ such that $p_i^\beta= m_1<\cdots <m_\mu=p_i^\beta p_{i+1}^{\alpha_{i+1}}\cdots p_k^{\alpha_k}$. For $j\in\{1, \ldots ,\mu\}$, let $\Gamma_{j}$ be the subgraph of $\Gamma_{i,\beta}$ induced by $\Omega_{m_{j}}(G')$. Note that $\Gamma_{j}$ is a complete multipartite graph with the same number of vertices in each partite set. Therefore $\Gamma_{j}$ is Hamiltonian for each $j$.
    Let $H_{1}$ be a Hamiltonian path in $\Gamma_{1}$ with initial and terminal vertices  $x_1\in \Omega_{p_i^\beta}(G')$ and $y_1$, respectively.   \textbf{Claim I} guarantees an  $x_{2}\in \Omega_{m_2}(G) $ such that $x_{2}\sim   y_1 $.  Now choose a Hamiltonian path $H_{2}$ in $\Gamma_{2}$ with initial vertex $x_{2}$. Let the terminal vertex of $H_{2}$ be $y_{2}$.  In general, for $1\leq j\leq \mu-1$, once a Hamiltonian path $H
    _{j}$ is chosen in $\Gamma_{j}$ with initial and terminal vertices $x_{j}$ and $y_{j}$, respectively, we choose a Hamiltonian path $H_{j+1}$ in $\Gamma_{j+1}$ with initial and terminal vertices $x_{j+1}$ and $y_{j+1}$, respectively, such that $y_{j}\sim x_{j+1}$. The paths $H_1, \ldots , H_\mu$ altogether produce a Hamiltonian path $H_{i,\beta}$ in $\Gamma_{i,\beta}$ with initial and terminal vertices $x_{1}$ and $y_\mu\in \Omega_{p_i^\beta p_{i+1}^{\alpha_{i+1}}\cdots p_k^{\alpha_k}}(G')$, respectively.  This completes the proof of the claim.
    \\
     \textbf{Claim III:}  Let $x\in G'$ such that $o(x)=p_i^j p_{i+1}^{\alpha_{i+1}}\cdots p_k^{\alpha_k}$, where $1\leq i\leq k-1$ and $j\geq 2$. Then there exists $y\in \Omega _{p_i^{j-1}}(G')$ such that $\langle x, y\rangle$ is not cyclic. \\
     \textit{Proof of the claim.}  Suppose $x=(x_1,\ldots , x_i,  \ldots , x_k)$, such that $o(x)=p_i^j p_{i+1}^{\alpha_{i+1}}\cdots p_k^{\alpha_k}$.  Observe that $o(x_i)=p_i^{j}$. Let $x_i'\in \langle x_i\rangle$ such that $o(x_i')=p_i^{j-1}$. By Theorem \ref{Subgroups theorem}, there exists $y_i'\in P_i$ such that $o(y_i')=p_i^{j-1}$ and $\langle x_i'\rangle \neq \langle y_i' \rangle$.  Let $y=(e', \ldots  ,e',y_i',e', \ldots , e', e'')$. Then the element $y$ belongs to the set $\Omega _{p_i^{j-1}}(G')$. Then $\langle x , y \rangle$ is a non-cyclic group. This proves the claim. \\
\textbf{Claim IV:} Let $x\in G'$ such that $o(x)=p_i p_{i+1}^{\alpha_{i+1}}\cdots p_k^{\alpha_k}$, where $1\leq i\leq k-1$. Then there exists  $y\in \Omega _{p_{i+1}^{\alpha_{i+1}}}(G')$ such that $\langle x, y\rangle$ is not cyclic. \\
  \textit{Proof of the claim.}    Suppose $x=(x_1,\ldots , x_i,  \ldots , x_k)$ such that $o(x)=p_i p_{i+1}^{\alpha_{i+1}}\cdots p_k^{\alpha_k}$. Then we have $o(x_{i+1})=p_{i+1}^{\alpha_{i+1}}$. By Theorem \ref{Subgroups theorem}, there exists  $y_{i+1}\in P_{i+1}$ such that $\langle x_{i+1}\rangle \neq \langle y_{i+1}\rangle$ and $o(y_{i+1})=p_{i+1}^{\alpha_{i+1}}$. Let $y=(e', \ldots  ,e',y_{i+1},e', \ldots , e', e'')$. Then $\langle x,y \rangle$ is a non-cyclic group. This proves the claim.
     
    Now, we discover a Hamiltonian path in $\Gamma (G')$.  Consider $x_{1,\alpha_1}\in \Omega_{p_1^{\alpha_1}p_k}(G')$. As in the proof of \textbf{Claim II}, we get a Hamiltonian path $H_{1,\alpha_1}$ with initial vertex $x_{1,\alpha_1}$ in $\Gamma_{1,\alpha _1}$. Let $y_{1,\alpha_1}$ be the terminal vertex of $H_{1,\alpha_1}$. Note that $y_{1,\alpha_1}\in \Omega_{p_1^{\alpha_1}p_2^{\alpha_2} \cdots p_k^{\alpha_k}}(G')$.  If $\alpha_1\geq 2$, then by \textbf{Claim III}, there exists an element $x_{1, \alpha_1-1} \in \Omega_{p_1^{\alpha_1-1}}(G')$ such that $y_{1,\alpha_1}\sim x_{1, \alpha_1-1}$. By \textbf{Claim II}, we get a Hamiltonian path $H_{1,\alpha_1-1}$ with initial vertex $x_{1,\alpha_1-1}$ in $\Gamma_{1,\alpha _1}$. Let $y_{1,\alpha_1-1}$ be the terminal vertex of $H_{1,\alpha_1-1}$. Note that $y_{1,\alpha_1-1}\in \Omega_{p_1^{\alpha_1-1}p_2^{\alpha_2} \cdots p_k^{\alpha_k}}(G')$. 
  In general, for $1\leq \ell \leq \alpha_1-1$, once a Hamiltonian path $H_{1,\alpha_1- \ell+1}$ is chosen in $\Gamma_{1,\alpha_1- \ell+1}$ with initial and terminal vertices $x_{1,\alpha_1- \ell+1}$ and $y_{1,\alpha_1- \ell+1}$, respectively, we choose a Hamiltonian path $H_{1,\alpha_1- \ell}$ in $\Gamma_{1, \alpha_1- \ell}$ with initial vertex $x_{1, \alpha_1- \ell}$ such that $y_{1, \alpha_1- \ell+1}\sim x_{1, \alpha_1- \ell}$. The paths $H_{1,\alpha_1}, H_{1,\alpha_1-1} \ldots , H_{1,1}$ altogether produce a Hamiltonian path $\mathcal{H}_1$ in the subgraph of $\Gamma(G') $ induced by the set of vertices whose order belongs to  $\mathcal{B}_{1,\alpha _1}\cup  \cdots \cup \mathcal{B}_{1,1}$ with initial and terminal vertices $x_{1,\alpha_1}$ and $y_{1,1}$, respectively.

    Note that $y_{1,1}\in \Omega_{p_1p_2^{\alpha_2}\ldots p_k^{\alpha_k}}$. By \textbf{Claim IV}, there exists an element $x_{2, \alpha_2} \in \Omega_{p_2^{\alpha_2}}(G')$ such that $y_{1,1}\sim x_{2, \alpha_2}$.  Proceeding as in the previous paragraph we get a Hamiltonian path $\mathcal{H}_2$ in the subgraph of $\Gamma(G') $ induced by the set of vertices whose order belongs to   $\mathcal{B}_{2,\alpha _2}\cup  \cdots \cup \mathcal{B}_{2,1}$, with initial and terminal vertices $x_{2,\alpha_2}$ and $y_{2,1}$, respectively.

    In general, for each $i\in\{1,\ldots , k-1\}$, we get a Hamiltonian path $\mathcal{H}_i$ in the subgraph induced by the set of vertices whose order belongs to   $\mathcal{B}_{i,\alpha _i}\cup  \cdots \cup \mathcal{B}_{i,1}$. Further, for $1\leq i\leq k-2$, the terminal vertex of $\mathcal{H}_i$ is adjacent to the initial vertex of $\mathcal{H}_{i+1}$. Let the terminal vertex of $\mathcal{H}_{k-1}$ be $y_{k-1,1}$. Note that $y_{k-1,1}\in \Omega_{p_{k-1}p_k^{\alpha_k}}$.   

 By \textbf{Claim IV}, there exists an element $x_{k,\alpha_k}\in \Omega_{p_k^{\alpha_k}}$ such that $y_{k-1,1}\sim x_{k,\alpha_k}$. Note that for $1\leq j \leq \alpha_k$, we have $\mathcal{B}_{k,j}=\{p_k^j\}$. Using Lemma \ref{Hamiltonian path lemma}, we get Hamiltonian paths  $H_{k,\alpha_k},  \ldots , H_{k,1}$ in $\Gamma_{k,\alpha_k},\ldots , \Gamma_{k,1} $, respectively. Further, the initial vertex of $H_{k,\alpha_k}$ is $x_{k,\alpha_k}$, and due to Theorem \ref{Subgroups theorem}, initial vertices of the remaining paths can be chosen in such a way that the terminal vertex of $H_{k,\alpha_k-\ell+1}$ is adjacent to the initial vertex of $H_{k,\alpha_k-\ell}$ for $1\leq \ell \leq \alpha_k-1$. Let the terminal vertex of $H_{k,1}$ be $y_{k,1}$. The paths $H_{k,\alpha_k}, H_{k,\alpha_k-1} \ldots , H_{k,1}$ altogether produce a Hamiltonian path $\mathcal{H}_k$ in the subgraph of $\Gamma(G') $ induced by the set of vertices whose order belongs to   $\mathcal{B}_{k,\alpha _k}\cup  \cdots \cup \mathcal{B}_{k,1}$ with initial and terminal vertices $x_{k,\alpha_k}$ and $y_{k,1}$, respectively. Clearly, $o(y_{k,1})=p_k$. Since $\mathfrak{m}_{p_k}(G')\geq 3$, one can choose $y_{k,1}$ in such a way that $y_{k,1}\notin \langle x_{1,\alpha_k}\rangle$. It follows that $y_{k,1}\sim x_{1,\alpha_k}$. 

 Finally, the paths $\mathcal{H}_1,\mathcal{H}_2,\ldots, \mathcal{H}_k$ altogether produce a Hamiltonian path in $\Gamma(G')$ with initial and terminal vertices $x_{1,\alpha_k}$ and $y_{k,1}$, respectively. As $y_{k,1}\sim x_{1,\alpha_k}$,  we get a Hamiltonian cycle in $\Gamma(G')$. This completes the proof.
\end{proof}

\begin{theorem}
    Let  $G= \mathbb{Z}_n  \times  \mathcal{G}$. Then $\c$ is Hamiltonian.
\end{theorem}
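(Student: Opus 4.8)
The plan is to reduce the statement to the structure already handled in Theorem \ref{ZnPG Hamiltonian} and then reuse the path--gluing machinery developed there. First I would invoke Lemma \ref{Abdollahi Hamiltonian Lemma}, which reduces Hamiltonicity of $\Gamma(G)$ to that of $\Gamma(G/\cy)$. By Theorem \ref{Cyclizer nilpotent}, for $G=\mathbb{Z}_n\times\mathcal{G}$ with $\gcd(n,|\mathcal{G}|)=1$ we have $\cy=\{(\overline{x},e'):\overline{x}\in\mathbb{Z}_n\}=\mathbb{Z}_n\times\{e'\}$, whence $G/\cy\cong\mathcal{G}$. Since $\mathcal{G}$ is non-cyclic, it therefore suffices to prove that $\Gamma(\mathcal{G})$ is Hamiltonian.

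Next I would set up the Sylow decomposition exactly as in the proof of Theorem \ref{ZnPG Hamiltonian}. As $\mathcal{G}$ is nilpotent, write $\mathcal{G}=P_1\times P_2\times\cdots\times P_k$ with $p_1>p_2>\cdots>p_k$, where $P_i$ is the Sylow $p_i$-subgroup. By hypothesis $\mathcal{G}$ has odd order and no cyclic Sylow subgroup, so every $p_i$ is odd and every $P_i$ is non-cyclic; in particular no $P_i$ is a $2$-group of maximal class. Consequently Theorem \ref{Subgroups theorem} applies to each $P_i$, giving $\mathfrak{m}_{p_i}(P_i)\equiv 1+p_i\pmod{p_i^{2}}$ (so $\mathfrak{m}_{p_i}(P_i)\geq p_i+1\geq 4$) and $p_i\mid\mathfrak{m}_{p_i^{\,j}}(P_i)$ for $2\leq j\leq\alpha_i$. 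Via Lemma \ref{multipartite Lemma}, each $\Omega_m(\mathcal{G})$ then induces a complete $\mathfrak{m}_m(\mathcal{G})$-partite graph with $\phi(m)$ vertices per part; moreover, arguing as in Lemma \ref{m_d>2 P G} one checks that $\mathfrak{m}_{o(g)}(\mathcal{G})\geq 2$ for every $g\in\mathcal{G}\setminus\{e'\}$, and in fact $\mathfrak{m}_{p_i^{\,j}}(\mathcal{G})\geq 3$ for every relevant $i,j$.

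With these structural facts in hand, the construction of Theorem \ref{ZnPG Hamiltonian} carries over essentially verbatim, with $\mathcal{G}$ playing the role of $G'=\mathbb{P}\times\mathcal{G}$. Concretely, I would partition $\pi(\mathcal{G})$ into the blocks $\mathcal{B}_{i,j}$ defined there, re-establish the analogs of Claims I--IV (each of which uses only Theorem \ref{Subgroups theorem}, now valid for every $P_i$), build for each $i$ a Hamiltonian path $\mathcal{H}_i$ through all vertices whose order lies in $\mathcal{B}_{i,\alpha_i}\cup\cdots\cup\mathcal{B}_{i,1}$, and arrange the endpoints so that the terminal vertex of $\mathcal{H}_i$ is adjacent to the initial vertex of $\mathcal{H}_{i+1}$. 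The single place where Theorem \ref{ZnPG Hamiltonian} treats the smallest prime separately (the $2$-group factor $\mathbb{P}$, handled through Lemma \ref{Hamiltonian path lemma}) now needs no special argument: since $p_k$ is odd, $P_k$ also satisfies Theorem \ref{Subgroups theorem}, so the final block $\mathcal{B}_{k,\cdot}$ is treated by the same Claim-based path construction, and the analog of Lemma \ref{Hamiltonian path lemma} for $\mathcal{G}$ holds because $\mathfrak{m}_{p_k^{\,j}}(\mathcal{G})\geq 2$. Finally I would close the cycle by choosing the terminal vertex $y_{k,1}\in\Omega_{p_k}(\mathcal{G})$ so that $y_{k,1}\notin\langle x_{1,\alpha_1}\rangle$, which is possible because $\mathfrak{m}_{p_k}(\mathcal{G})\geq 3$; then $y_{k,1}\sim x_{1,\alpha_1}$ completes the Hamiltonian cycle.

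I do not anticipate a genuine obstacle: the substance is that the odd, non-cyclic Sylow subgroups of $\mathcal{G}$ satisfy the same counting inequalities (indeed strictly stronger ones, since every prime is at least $3$) that powered the proof of Theorem \ref{ZnPG Hamiltonian}. The only real care required is the bookkeeping needed to verify that none of the steps in that proof secretly relied on the presence of the $2$-group factor $\mathbb{P}$; once this is confirmed, the same gluing of Hamiltonian paths across the order classes yields the desired Hamiltonian cycle in $\Gamma(\mathcal{G})$, and hence in $\Gamma(G)$.
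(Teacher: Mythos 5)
Your proposal is correct and takes essentially the same route as the paper, whose entire proof consists of noting that the construction of Theorem~\ref{ZnPG Hamiltonian} goes through (indeed more smoothly, exactly as you observe, since every Sylow subgroup of $\mathcal{G}$ is odd and non-cyclic, so Theorem~\ref{Subgroups theorem} applies to every prime and no analogue of the special $2$-group treatment is needed). The only point you should state explicitly is the degenerate case where $\mathcal{G}$ is itself a $p$-group, i.e.\ $k=1$: there the starting vertex of order $p_1^{\alpha_1}p_k$ used in the gluing scheme does not exist, and the paper dispatches this case separately by citing Theorem~\ref{pgroup Hamiltonian} (with $P=\mathcal{G}$), which your write-up should do as well.
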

\begin{proof}
    If $\mathcal{G}$ is a $p$-group, then the result holds by Theorem~\ref{pgroup Hamiltonian}. Else, the proof is similar to the proof of Theorem~\ref{ZnPG Hamiltonian}. Hence, we omitted the proof.
\end{proof}

Now we prove that if $G= \mathbb{Z}_n  \times \mathcal{P} \times  \mathcal{G}$, where $\mathcal{P}$ is a $2$-group of maximal class, then $\c$ is Hamiltonian. The approach used in proving Theorem~\ref{P maximal hamiltonian} differs from that of Theorem \ref{ZnPG Hamiltonian} due to a key structural property of  \( \mathcal{P} \). In this case, we have \( \mathfrak{m}_{2^\alpha}(G) = 1 \) for \( \alpha \geq 3 \). This condition prevents us from directly applying the technique used in Theorem \ref{ZnPG Hamiltonian}. Consequently, a different approach is required to establish the Hamiltonicity of \( \Gamma(G) \) in this case.

\begin{theorem}{\label{P maximal hamiltonian}}
    Let $G= \mathbb{Z}_n  \times \mathcal{P} \times  \mathcal{G}$, where $\mathcal{P}$ is a $2$-group of maximal class. Then $\c$ is Hamiltonian.
\end{theorem}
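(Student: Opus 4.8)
The plan is to strip away the coprime cyclic factor and the quaternion centre via Lemma~\ref{Abdollahi Hamiltonian Lemma}, and then build an explicit Hamiltonian cycle on the remaining product. By Theorem~\ref{Cyclizer nilpotent}, when $\mathcal{P}$ is dihedral or semidihedral we have $\cy=\mathbb{Z}_n\times\{e_1\}\times\{e'\}$, so $G/\cy\cong\mathcal{P}\times\mathcal{G}$ and Lemma~\ref{Abdollahi Hamiltonian Lemma} reduces the problem to showing $\Gamma(\mathcal{P}\times\mathcal{G})$ is Hamiltonian. When $\mathcal{P}=\mathcal{Q}_{2^{t+1}}$ we instead get $\cy=\mathbb{Z}_n\times\{1,b^2\}\times\{e'\}$ and $G/\cy\cong(\mathcal{Q}_{2^{t+1}}/Z)\times\mathcal{G}\cong\mathcal{D}_{2^{t}}\times\mathcal{G}$; this is the dihedral case for $t\geq3$ and, for $t=2$, equals $(\mathbb{Z}_2\times\mathbb{Z}_2)\times\mathcal{G}$, already covered by Theorem~\ref{ZnPG Hamiltonian}. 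Thus it suffices to treat $H:=\mathcal{P}\times\mathcal{G}$ with $\mathcal{P}$ dihedral or semidihedral of order at least $8$.

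Next I would record the governing adjacency rule. Since $\gcd(|\mathcal{P}|,|\mathcal{G}|)=1$, every $\langle(y_1,z_1),(y_2,z_2)\rangle$ splits as $\langle y_1,y_2\rangle\times\langle z_1,z_2\rangle$, a product of coprime-order subgroups, which is cyclic exactly when both factors are. Hence $(y_1,z_1)\sim(y_2,z_2)$ in $\Gamma(H)$ if and only if $\langle y_1,y_2\rangle$ is non-cyclic in $\mathcal{P}$ or $\langle z_1,z_2\rangle$ is non-cyclic in $\mathcal{G}$. Writing $A=\langle a\rangle$ for the unique cyclic subgroup of order $2^t$ in $\mathcal{P}$ and $B=\mathcal{P}\setminus A$ for its reflections, I partition $V(\Gamma(H))=X\sqcup Y\sqcup Z$, where $X=(A\setminus\{e\})\times\{e'\}$ consists of the $2^t-1$ pure rotations, $Y=A\times(\mathcal{G}\setminus\{e'\})$, and $Z=B\times\mathcal{G}$. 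The adjacency rule identifies the subgraph induced on $Y$ with $\Gamma(\mathbb{Z}_{2^t}\times\mathcal{G})$ (using $A\cong\mathbb{Z}_{2^t}$ and $\mathrm{Cyc}(\mathbb{Z}_{2^t}\times\mathcal{G})=\mathbb{Z}_{2^t}\times\{e'\}$ from Theorem~\ref{Cyclizer nilpotent}), which is Hamiltonian by the already-established case $G=\mathbb{Z}_n\times\mathcal{G}$; let $C_Y$ be such a cycle.

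On the reflection block, vertices lying in different slices $\{y\}\times\mathcal{G}$ are always adjacent, so $Z$ contains the complete $2^t$-partite graph with parts of size $|\mathcal{G}|$; as $2^t\geq 4$ no part exceeds half the vertices, whence $Z$ has a Hamiltonian cycle. Each pure rotation in $X$ is adjacent to every vertex of $Z$ in the dihedral case, and in the semidihedral case to every vertex of $Z$ except through the central involution $a^{2^{t-1}}$, which is still adjacent to all of the (numerous) order-$2$ reflection slices. Consequently the $2^t-1$ vertices of $X$ can be threaded into the Hamiltonian cycle of $Z$, one per edge, producing a Hamiltonian cycle $C_W$ of $W:=X\cup Z$ in which at least one $Z$--$Z$ edge survives.

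Finally I would splice $C_Y$ and $C_W$ into a single cycle. A short counting argument (using that the rotations whose $\mathcal{P}$-component has order at least $4$ dominate $A$ when $2^t\geq 4$) yields an edge $\{p,q\}$ of $C_Y$ whose endpoints both have $\mathcal{P}$-component of order at least $4$; such vertices are adjacent to every reflection vertex, so choosing any surviving $Z$--$Z$ edge $\{r,s\}$ of $C_W$, deleting $\{p,q\}$ and $\{r,s\}$ and adding the cross edges $\{p,r\},\{q,s\}$ fuses the two cycles into a Hamiltonian cycle of $\Gamma(H)$. If $\mathcal{G}$ is trivial then $Y=\emptyset$ and $C_W$ is itself the required cycle. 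I expect the main obstacle to be exactly the degeneracy flagged before the theorem: because $\mathfrak{m}_{2^\alpha}(G)=1$ for $\alpha\geq3$, all high-order cyclic subgroups collapse into the single rotation subgroup $A$, so the pure rotations $X$ have no neighbours among the rotations and must be routed entirely through $Z$; the delicate points are verifying that $Z$ (and, for semidihedral groups, its order-$2$ sub-block) is always large enough to absorb $X$ and to furnish the connecting edge to $Y$.
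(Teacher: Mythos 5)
Your overall architecture is genuinely different from the paper's: the paper, after the same cyclizer reduction (and the same quaternion-to-dihedral collapse), splits the quotient $\mathcal{P}\times\mathcal{G}$ into the vertices with non-trivial $\mathcal{G}$-component, handled by the path machinery of Theorem~\ref{ZnPG Hamiltonian} via Lemma~\ref{m_d>2 G}, and the set $\{(g,e'):g\neq 1\}$, through which it builds an explicit path alternating reflections and rotations; you instead partition into $X\sqcup Y\sqcup Z$, recycle the $\mathbb{Z}_n\times\mathcal{G}$ theorem wholesale on $Y$ (a nice observation -- the induced subgraph on $Y$ really is $\Gamma(\mathbb{Z}_{2^t}\times\mathcal{G})$, by the coprimality splitting of generated subgroups), and thread $X$ through a cycle on $Z$. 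However, there is a concrete false step in the semidihedral case. Your claim that in $Z$ ``vertices lying in different slices $\{y\}\times\mathcal{G}$ are always adjacent, so $Z$ contains the complete $2^t$-partite graph'' fails for $\mathcal{SD}_{2^{t+1}}$: for odd $i$ one computes $(a^ib)^2=a^{2^{t-1}}$ and $(a^ib)^3=a^{i+2^{t-1}}b$, so the two reflections $a^ib$ and $a^{i+2^{t-1}}b$ lie in one cyclic subgroup of order $4$. Hence $(a^ib,z)$ and $(a^{i+2^{t-1}}b,z')$ are adjacent only when $\langle z,z'\rangle$ is non-cyclic, and when $\mathcal{G}$ is trivial they are simply non-adjacent; the claimed spanning complete $2^t$-partite subgraph of $Z$ does not exist, and with it collapse your stated reasons for the Hamiltonicity of $Z$, for the availability of an edge into which the central involution $(a^{2^{t-1}},e')$ can be inserted, and for a surviving $Z$--$Z$ edge.

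The damage is repairable: merging each paired odd slice $\bigl(\{a^ib\}\cup\{a^{i+2^{t-1}}b\}\bigr)\times\mathcal{G}$ into a single part yields a spanning complete multipartite subgraph of $Z$ with $2^{t-1}$ parts of size $|\mathcal{G}|$ and $2^{t-2}$ parts of size $2|\mathcal{G}|$ (distinct involutions, and distinct order-$4$ subgroups sharing $a^{2^{t-1}}$, always generate non-cyclic groups), which is Hamiltonian and can be arranged to contain an edge between two order-$2$ reflection slices for the central involution. But as written the semidihedral case does not go through, and the points you defer as ``delicate'' are exactly where the content lies -- the paper's Case~3 is engineered around precisely this phenomenon, its explicit path $(ab,e')\sim(a,e')\sim(a^3b,e')\sim(a^3,e')\sim\cdots$ stepping through the order-$4$ reflections without ever using a same-pair edge. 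Two smaller remarks: your counting argument for a good edge of $C_Y$ needs $2^t>4$ if ``good'' means rotation part of order at least $4$ (at $t=2$, which is dihedral, order at least $2$ suffices and should be used instead); and routing $\mathcal{Q}_8$ through Theorem~\ref{ZnPG Hamiltonian} with $\mathbb{P}=\mathbb{Z}_2\times\mathbb{Z}_2$ clashes with the paper's conventions, under which the Klein group occurs as $\mathcal{D}_4$ (the $t=1$ dihedral case, as the paper's own Case~1 handles it), though nothing mathematical hinges on this.
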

\begin{proof}
    Due to Lemma~\ref{Abdollahi Hamiltonian Lemma}, it is sufficient to show that $\Gamma (G/ \cy )$ is Hamiltonian. We consider the following three cases.

 \noindent   \textbf{Case 1:} $\mathcal{P}=\mathcal{D}_{2^{t+1}}$,
    where $t\geq 1$. In this case, recall that $\cy = \mathbb{Z}_n \times \{1\} \times \{e'\} $. It follows that $G/\cy \cong \mathcal{D}_{2^{t+1}} \times  \mathcal{G}$. Let $G'=\mathcal{D}_{2^{t+1}} \times  \mathcal{G}$. If $\mathcal{G}$ is trivial, then by Theorem \ref{Abdollahi Hamiltonian propostion}, the graph $\Gamma(G')$ is Hamiltonian. 
    
    Now consider $\mathcal{G}\neq \{e'\}$. Let $P_1,P_2, \ldots , P_k$ be the Sylow subgroups of $G'$. Suppose that \linebreak[4] $\mathrm{exp}(G')=p_1^{\alpha_1}p_2^{\alpha_2} \cdots p_k^{\alpha_k}$, where $p_1>p_2>\cdots >p_k$ and $p_k^{\alpha_k}=2^t$. Let $x\in \Omega _{p_1^{\alpha_1}p_2^{\alpha_2} \cdots p_k^{\alpha_k}}(G')$ such that $( b, e')\notin \langle x\rangle$. By Lemma \ref{m_d>2 G}, we have $\mathfrak{m}_{o(g)}(G')\geq 3$ for all $g\in \{(x_1,x_2 )\in G': x_2\neq e'\}$.  As in the proof of Theorem \ref{ZnPG Hamiltonian}, we get a Hamiltonian path $H_1$  from $x$ to some $y\in \Omega_{p_{k-1}p_k^{\alpha_k}}(G')$ in the subgraph induced by $\{(x_1,x_2 )\in G': x_2\neq e'\}$. Now consider the set $\{(g,e'): g\neq 1\}$.
    Note that either $ (ab,e') \notin \langle y \rangle$ or $(a^2b,e') \notin \langle y \rangle$. Since $\langle y \rangle$ contains an element of order $2$, therefore either $\langle (ab,e'), y \rangle$ or $\langle (a^2b,e'), y \rangle$ is non-cyclic.  Without loss of generality, assume that $\langle (ab,e'), y \rangle$ is not cyclic, that is, $y\sim (ab,e')$. Also, note that the subgroup $\langle (a^ib,e'), (a^j,e')\rangle$ is non-cyclic, that is, $(a^ib,e')\sim (a^j,e') $ for $1\leq i,j \leq 2^t-1$. Thus $(ab,e')\sim (a,e')\sim (a^2b,e')\sim (a^2,e')\sim \cdots \sim (a^{2^{t}-1},e') \sim (b,e')$.  Accordingly, we get a Hamiltonian path $H_2$ in the subgraph of $\Gamma(G')$ induced by $\{(g,e'): g\neq 1\}$ with initial and terminal vertices $(ab,e')$ and  $(b,e')$, respectively. 

    Since $( b, e')\notin \langle x\rangle$, and $\langle x \rangle$ contains an element of order $2$, the group $\langle ( b, e'), x \rangle$ is non-cyclic, that is, $( b, e')\sim  x$. Thus the paths $H_1$ and $H_2$ produce a Hamiltonian path in $\Gamma (G')$.

      \noindent \textbf{Case 2:} $\mathcal{P}= \mathcal{Q}_{2^{t+1}}$, where $t\geq 2$. In this case, note that $\cy = \mathbb{Z}_n \times \{1, b^2\} \times \{e'\} $. It follows that $G/\cy \cong \mathcal{D}_{2^{t}} \times  \mathcal{G}$. By \textbf{Case 1}, we find that the graph $\Gamma(G/ \cy)$ is Hamiltonian.

\noindent \textbf{Case 3:} $\mathcal{P}=  \mathcal{SD}_{2^{t+1}}$, where $t\geq 3$. In this case, recall that $\cy = \mathbb{Z}_n \times \{1\} \times \{e'\} $, and so $G/\cy \cong \mathcal{SD}_{2^{t+1}} \times  \mathcal{G}$. Let $G'=\mathcal{SD}_{2^{t+1}} \times  \mathcal{G}$. If $\mathcal{G}$ is trivial, then by Theorem \ref{Abdollahi Hamiltonian propostion}, $\Gamma(G')$ is Hamiltonian. 

     Now consider the case that \(\mathcal{G} \neq \{e'\}\). Denote the Sylow subgroups of \(G'\) by \(P_1, P_2, \ldots, P_k\). Suppose \(\mathrm{exp}(G') = p_1^{\alpha_1}p_2^{\alpha_2} \cdots p_k^{\alpha_k}\), where \(p_1 > p_2 > \cdots > p_k\) and \(p_k^{\alpha_k} = 2^t\). Let \(x \in \Omega_{p_1^{\alpha_1}p_2^{\alpha_2} \cdots p_k^{\alpha_k}}(G')\) such that \((b, e') \notin \langle x \rangle\). By Lemma \ref{m_d>2 G}, we know that \(\mathfrak{m}_{o(g)}(G') \geq 3\) for \(g \in \{(x_1, x_2) \in G' : x_2 \neq e'\}\). 

As in the proof of Theorem \ref{ZnPG Hamiltonian}, we get a Hamiltonian path $H_1$  from $x$ to some $y\in \Omega_{p_{k-1}p_k^{\alpha_k}}(G')$ in the subgraph induced by $\{(x_1,x_2 )\in G': x_2\neq e'\}$. Now consider the set $\{(g,e'): g\neq 1\}$.

Note that \(\langle (a, e') \rangle \subseteq \langle y \rangle\), as  \(\langle (a, e') \rangle\) is the only cyclic subgroup of $G'$ of order $p_k^{\alpha_k}$. Then \((a^{2^{t-2}}, e')\), \((a^{3.2^{t-2}}, e')\) are the elements of order \(4\) contained in \(\langle y \rangle\). Since $o((ab, e'))=4$ and neither $(ab, e')=(a^{2^{t-2}}, e')$ nor $(ab, e')=(a^{3.2^{t-2}}, e')$, it follows that the subgroup \(\langle (ab, e'), y \rangle\) is non-cyclic. Thus $y\sim (ab, e')$. Further, note that 
\begin{align*}
  &  (ab, e') \sim (a, e') \sim (a^3b, e') \sim (a^3, e') \sim \cdots \sim (a^{2^{t}-1}b, e')  \sim  (a^{2^{t}-1}, e') \sim   (a^2b, e'), \text{ and } \\
  & (a^2b, e') \sim  (a^2 , e') \sim  (a^4b, e') \sim  (a^4, e')\sim \cdots \sim (a^{2^t-2},e') \sim (b, e').
\end{align*}
 Accordingly, we get a Hamiltonian path $H_2$ in the subgraph of $\Gamma(G')$ induced by $\{(g,e'): g\neq 1\}$ with initial and terminal vertices $(ab,e')$ and  $(b,e')$, respectively. 
Thus the paths $H_1$ and $H_2$ produce a Hamiltonian path with initial and terminal vertices $x$ and $(b,e')$, respectively, in $\Gamma (G')$.
Since \((b, e') \notin \langle x \rangle\) and \(\langle x \rangle\) contains an element of order \(2\), it follows that \(\langle (b, e'), x \rangle\) is non-cyclic.  Consequently, we obtain a Hamiltonian cycle
in \(\Gamma(G')\). 
\end{proof}
\section{Perfect and Total Perfect Codes in $\c$}

In this section, we characterize all finite non-cyclic groups $G$ such that the non-cyclic graph \( \Gamma(G) \) admits a perfect code. We establish that \( \Gamma(G) \) admits a perfect code if and only if \( G \) has a maximal cyclic subgroup of order \( 2 \). Finally, we prove that \( \Gamma(G) \) does not admit total perfect code when \( G \) is a finite non-cyclic nilpotent group.



\begin{proposition} {\label{perfect code propostion}}
    Let \( G \) be a finite non-cyclic group. If \( C \) is a perfect code of \( \Gamma(G) \) and \( x \in C \), then \( o(x) = 2 \).  
\end{proposition}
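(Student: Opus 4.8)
The plan is to argue by contradiction: assuming $o(x)\ge 3$, I will produce a single vertex that is forced to have two distinct neighbours in $C$, violating the defining property of a perfect code. Note first that the order cannot be $1$, since the identity lies in $\cy$ and is therefore not a vertex of $\c$; so the entire content is to rule out $o(x)\ge 3$. The engine of the argument is the following elementary observation about $\c$: if $g$ and $g'$ generate the same cyclic subgroup, then they have exactly the same neighbours in $\c$. Indeed, for any vertex $y$ one has $\langle g',y\rangle=\langle g,y\rangle$ whenever $\langle g'\rangle=\langle g\rangle$, so $\langle g,y\rangle$ is non-cyclic if and only if $\langle g',y\rangle$ is. I would apply this with $g=x$ and $g'=x^{-1}$, which generates $\langle x\rangle$ since $\gcd(o(x)-1,o(x))=1$; this shows that $x$ and $x^{-1}$ have identical neighbourhoods in $\c$.

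With this in hand, suppose $o(x)\ge 3$, so that $x^{-1}\neq x$ and, because $\cy$ is a (normal) subgroup, $x^{-1}$ is again a vertex of $\c$. Since $x\in C$ and $C$ is independent, no neighbour of $x$ lies in $C$; by the neighbourhood equality the same holds for $x^{-1}$, that is, $x^{-1}$ has no neighbour in $C$. The perfect-code condition forces every vertex outside $C$ to have exactly one neighbour in $C$, so $x^{-1}$ cannot be outside $C$; hence $x^{-1}\in C$. Finally, because $x\notin\cy$ there exists $b$ with $\langle x,b\rangle$ non-cyclic, so $x$ has at least one neighbour $y$ in $\c$. This $y$ is not in $C$ (it is adjacent to the codeword $x$), yet by the neighbourhood equality it is adjacent to \emph{both} $x$ and $x^{-1}$, two distinct elements of $C$. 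Thus $y$ has two neighbours in $C$, contradicting the perfect-code property. Therefore $o(x)=2$.

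The only delicate points are bookkeeping rather than substance, and I would state them explicitly. First, one must confirm that $x^{-1}$ (and more generally every power $x^{k}$ with $\gcd(k,o(x))=1$) is genuinely a vertex of $\c$, which follows from $\cy$ being a subgroup. Second, one must verify that the witness $b$ to $x\notin\cy$ may be taken to be a vertex, which is automatic: if $\langle x,b\rangle$ is non-cyclic then $b\notin\cy$ by the very definition of the cyclizer. The conceptual heart of the proof — and the step most easily overlooked — is the neighbourhood equality for generators of $\langle x\rangle$, which converts the structural fact that $x$ and $x^{-1}$ generate the same subgroup into the combinatorial statement that $C$ cannot separate them; every remaining step is a short deduction directly from the definition of a perfect code.
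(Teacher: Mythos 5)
Your proof is correct and rests on the same engine as the paper's: since $\langle x\rangle=\langle x^{-1}\rangle$, the vertices $x$ and $x^{-1}$ have identical neighbourhoods in $\Gamma(G)$ while $x\nsim x^{-1}$, so no perfect code can accommodate an element of order at least $3$. The only difference is cosmetic bookkeeping in the final contradiction --- the paper forces $x^{-1}\notin C$ and then contradicts independence via the unique codeword neighbour $z$ of $x^{-1}$, whereas you force $x^{-1}\in C$ and contradict the exactly-one-neighbour condition at a common neighbour $y$; along the way you make explicit two small points the paper leaves implicit (that $x$ has a neighbour outside $C$, and that $x^{-1}$ is a vertex because $\mathrm{Cyc}(G)$ is a subgroup).
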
  

\begin{proof}  
Let \( C \) be a perfect code of \( \Gamma(G) \) and \( x \in C \).    Suppose, if possible, \( o(x) \geq 3 \). Then \( x \neq x^{-1} \) and the subgroup \( \langle x, x^{-1} \rangle \) is cyclic.  Now consider a vertex \( y \in V(\c) \setminus C \) such that \( x \sim y \). Since \( \langle x, y \rangle \) is non-cyclic, it follows that \( \langle x^{-1}, y \rangle \) is also non-cyclic. Consequently,  \( x^{-1} \in V(\c) \).    Since $C$ is a perfect code, $x\in C$, $y\in V(\c)\setminus C$, and $y$ is adjacent to both $x$ and $x^{-1}$ we must have that $ x^{-1} \in V(\c) \setminus C $. In that case, there exists a unique \( z \in C \) such that \( z \sim x^{-1} \). This implies that \( \langle z, x^{-1} \rangle \) is non-cyclic. Consequently, \( \langle z, x \rangle \) is also non-cyclic, which means \( z \sim x \) in \( \c \). Thus, $z$ and $x$ are two adjacent vertices in $C$, contradicting the fact that $C$ is an independent set. Hence we must have that $o(x)=2$.  
\end{proof} 
        

\begin{theorem}{\label{perfect code theorem}}
Let $G$ be a non-cyclic finite group. Then the following conditions are equivalent 
\begin{itemize}
    \item[(i)] $\Gamma(G)$ admits a perfect code.
    \item[(ii)]  $\Gamma(G)$ has a dominating vertex.
    \item[(iii)] $G$ has a maximal cyclic subgroup of order $2$.
\end{itemize}
\end{theorem}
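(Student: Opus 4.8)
The plan is to prove the three conditions equivalent by the cycle $(i)\Rightarrow(ii)\Rightarrow(iii)\Rightarrow(i)$. The unifying idea is that a perfect code of $\c$, if one exists, can only be a single vertex that dominates the whole graph, so all three conditions really amount to the existence of one dominating involution generating a maximal cyclic subgroup of order $2$. Throughout I would repeatedly invoke two elementary facts: a finite cyclic group contains at most one involution, and $\cy$ is a subgroup of $G$ (so membership in $\cy$ is inherited by every cyclic subgroup generated by one of its members). The latter is the source of the only genuinely delicate points, namely checking that the auxiliary elements produced along the way are honest vertices of $\c$.

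For $(i)\Rightarrow(ii)$, let $C$ be a perfect code. By Proposition~\ref{perfect code propostion} every element of $C$ is an involution. If $x,w\in C$ are \emph{distinct}, then $\langle x,w\rangle$ contains two distinct involutions and is therefore non-cyclic, so $x\sim w$; this contradicts the independence of $C$ unless $|C|\le 1$. Since $G$ is non-cyclic we have $V(\c)\neq\varnothing$, and a non-empty graph cannot admit the empty set as a perfect code, so $C=\{x\}$ with $o(x)=2$. The perfect-code condition now says precisely that every other vertex is adjacent to $x$, that is, $x$ is a dominating vertex.

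For $(ii)\Rightarrow(iii)$, let $x$ dominate $\c$. I would first show $o(x)=2$: if $o(x)\ge 3$ then $x\neq x^{-1}$ and $\langle x,x^{-1}\rangle=\langle x\rangle$ is cyclic, so $x\nsim x^{-1}$; since $\cy$ is a subgroup, $x^{-1}\in\cy$ would force $x\in\cy$, so $x^{-1}$ is a vertex distinct from $x$ and not adjacent to it, contradicting domination. Hence $o(x)=2$. To see that $\langle x\rangle$ is maximal cyclic, suppose instead $\langle x\rangle\subsetneq\langle z\rangle$ for some cyclic subgroup; then $\langle x,z\rangle=\langle z\rangle$ is cyclic, so $x\nsim z$, while $z\notin\cy$ (otherwise $x\in\langle z\rangle\subseteq\cy$), making $z$ a vertex $\neq x$ not adjacent to $x$ — again contradicting domination. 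Thus $\langle x\rangle$ is a maximal cyclic subgroup of order $2$.

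For $(iii)\Rightarrow(i)$, let $\langle x\rangle$ be a maximal cyclic subgroup of order $2$. Maximality gives $x\notin\cy$ (any cyclic $\langle x,y\rangle$ with $y\notin\langle x\rangle$ would strictly contain $\langle x\rangle$), so $x$ is a vertex; and for any vertex $y\neq x$, if $\langle x,y\rangle$ were cyclic then maximality forces $\langle x,y\rangle=\langle x\rangle$ and hence $y\in\{e,x\}$, which is impossible. So $x$ is adjacent to every other vertex and $\{x\}$ is an independent dominating set, i.e. a perfect code, closing the cycle. The main obstacle is not any single hard computation but the bookkeeping in the last two implications: one must verify in each case that the auxiliary element ($x^{-1}$ or $z$) lies outside $\cy$ and is therefore a bona fide vertex, and this is exactly where the subgroup property of $\cy$ carries the argument.
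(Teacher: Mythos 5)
Your proof is correct and follows essentially the same route as the paper: the cycle (i)$\Rightarrow$(ii)$\Rightarrow$(iii)$\Rightarrow$(i), with Proposition~\ref{perfect code propostion} forcing the code to be a single involution, and the same maximality arguments in the remaining implications. The only (harmless, indeed slightly cleaner) local deviations are that you certify the auxiliary elements $x^{-1}$ and $z$ are vertices via the subgroup property of $\mathrm{Cyc}(G)$, where the paper instead argues through adjacency with $y$ and through a second maximal cyclic subgroup $\langle g_2\rangle$, and that in (iii)$\Rightarrow$(i) you explicitly verify $x\notin\mathrm{Cyc}(G)$, a point the paper leaves implicit.
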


\begin{proof}
 Let $C$ be a perfect code of $\Gamma(G)$. Suppose, if possible, $C$ contains two distinct elements $x$ and $y$. By Proposition \ref{perfect code propostion},  $o(x) = o(y) = 2$. Since a cyclic subgroup can contain at most one element of order $2$, the subgroup $\langle x, y \rangle$ must be non-cyclic. Thus $C$ has two adjacent vertices, a contradiction. Hence $C$ must be a singleton set.
 Accordingly, the element in $C$ is a dominating vertex of $\c$. This proves that (i) implies (ii).

 Let $x\in V(\c)$ be a dominating vertex. We claim that $\langle x \rangle$ is a maximal cyclic subgroup of order $2$. This will prove that (ii) implies (iii).

On the contrary, assume that $o(x) \geq 3$. Let $y$ be an arbitrary element of $V(\Gamma(G))$. Since $x$ is a dominating vertex, it follows that $x \sim y$, which means $\langle x, y \rangle$ is non-cyclic. Consequently, $\langle x^{-1}, y \rangle$ is non-cyclic. Thus $x^{-1}\notin \cy$, that is,  $x^{-1}\in V(\c)$. However,  $\langle x, x^{-1} \rangle$ is a cyclic subgroup of $G$. Therefore $x \nsim x^{-1}$, a contradiction to the fact that $x$ is a dominating vertex. Hence $o(x) = 2$.

Now suppose that $\langle x \rangle$ is not a maximal cyclic subgroup. Then there exists an element $g_1 \in G$ such that $\langle x \rangle \subset \langle g_1 \rangle$ and $\langle g_1 \rangle \in \m$, where $\m$ denotes the collection of all maximal cyclic subgroups of $G$. Since $G$ is non-cyclic, we have $|\m| \geq 2$. Let $\langle g_2 \rangle \in \m$  such that $\langle g_1 \rangle \neq \langle g_2 \rangle$. Then $\langle g_1, g_2 \rangle$ is a non-cyclic subgroup of $G$. Consequently, $g_1 \notin \cy$, that is, $g_1 \in V(\Gamma(G))$. However, $\langle x, g_1 \rangle = \langle g_1 \rangle$, a cyclic subgroup of $G$. Therefore $x \nsim g_1$, contradicting the assumption that $x$ is a dominating vertex. Thus $\langle x \rangle$ must be a maximal cyclic subgroup of order $2$. 

Now we prove that (iii) implies (i). Let $\langle g \rangle$ be a maximal cyclic subgroup of order $2$. Let $g' \in V(\c)$ such that $g\neq g'$. If $\langle g, g' \rangle$ is cyclic, then it will be a cyclic subgroup of $G$ properly containing $\langle g \rangle$, contradicting the fact that $\langle g \rangle$ is a maximal cyclic subgroup. Therefore $\langle g, g' \rangle$ is non-cyclic, that is, $g \sim g'$. Hence $\{g\}$ is a perfect code of $\c$. This completes the proof.
\end{proof}
Theorem \ref{perfect code theorem} establishes necessary and sufficient conditions for the existence of a perfect code in the non-cyclic graph of a finite group. As a consequence, we obtain the following corollary, which specializes this result to finite non-cyclic nilpotent groups. The proof of the following corollary follows directly from Theorem \ref{perfect code theorem} and Proposition \ref{nilpotent maximal}.
\begin{corollary}
    Let $G$ be a finite non-cyclic nilpotent group. Then the non-cyclic graph $\c$ admits a perfect code if and only if $G$ is a $2$-group and it has a maximal cyclic subgroup of order $2$.
\end{corollary}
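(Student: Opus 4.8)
The plan is to reduce the statement to a purely group-theoretic equivalence and then read off the structure from the Sylow decomposition. By Theorem~\ref{perfect code theorem}, applied to the non-cyclic group $G$, the graph $\c$ admits a perfect code if and only if $G$ possesses a maximal cyclic subgroup of order $2$. Hence the corollary is equivalent to the assertion that a non-cyclic nilpotent group $G$ has a maximal cyclic subgroup of order $2$ if and only if $G$ is a $2$-group having such a subgroup. The reverse implication is immediate, so the entire content lies in proving that the mere existence of an order-$2$ maximal cyclic subgroup forces $G$ to be a $2$-group.

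For the forward direction, I would write $G = P_1 P_2 \cdots P_r$ as the internal direct product of its (nontrivial) Sylow subgroups, which is possible because $G$ is nilpotent (Theorem~\ref{nilpotent}). Suppose $M$ is a maximal cyclic subgroup of $G$ with $|M| = 2$. By Lemma~\ref{nilpotent maximal}, $M = M_1 M_2 \cdots M_r$ where each $M_i$ is a maximal cyclic subgroup of $P_i$, and consequently $|M| = \prod_{i=1}^{r} |M_i|$. Since each $P_i$ is a nontrivial $p_i$-group, it contains an element of order $p_i$, so every maximal cyclic subgroup $M_i$ is nontrivial and $|M_i| \geq p_i \geq 2$. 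The product of the $|M_i|$ equals $2$, and each factor is at least $2$; therefore $r = 1$ and $|M_1| = 2$. Thus $G = P_1$ is a single Sylow subgroup whose maximal cyclic subgroup $M$ has order $2$, forcing $p_1 = 2$. Hence $G$ is a $2$-group, as required.

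The argument is short, and its only genuine step is the multiplicativity of the order of a maximal cyclic subgroup across the Sylow factors. I expect the main point to watch is the verification that every $M_i$ is nontrivial, so that each factor of the product genuinely contributes at least $2$; this is precisely what rules out the possibility that some $P_i$ silently contributes a trivial factor and thereby allows a composite group to carry an order-$2$ maximal cyclic subgroup. Once the bound $|M_i| \geq 2$ is in place, the conclusion that the decomposition has a single factor and that this factor is a $2$-group is forced, and the equivalence with the perfect-code condition is inherited directly from Theorem~\ref{perfect code theorem}.
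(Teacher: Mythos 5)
Your proof is correct and follows exactly the route the paper intends: the paper states that the corollary ``follows directly from Theorem~\ref{perfect code theorem} and Proposition~\ref{nilpotent maximal}'', and your argument is precisely that reduction, with the details (multiplicativity of $|M|$ across the Sylow factors and nontriviality of each $M_i$) carefully filled in. No gaps.
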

\begin{theorem}
    Let $G$ be a finite non-cyclic nilpotent group. Then $\c$ does not admit total perfect code.
\end{theorem}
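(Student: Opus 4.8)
The plan is to prove that $\c$ admits no total perfect code for any finite non-cyclic nilpotent group $G$ by deriving a contradiction from the defining property of a total perfect code. Recall that if $T$ is a total perfect code, then every vertex of $\c$ (including every vertex of $T$ itself) is adjacent to exactly one vertex in $T$, and consequently the subgraph induced by $T$ is a perfect matching on $T$. My strategy is to exploit the rich structure of cyclizers and equivalence classes under $\mathcal{R}$ (where $g\,\mathcal{R}\,h$ iff $\langle g\rangle=\langle h\rangle$) to show that no such matching can exist globally. The key structural facts I intend to use are Lemma \ref{multipartite Lemma}, which says the subgraph induced on $\Omega_m(G)$ is complete $s$-partite with parts being the $\mathcal{R}$-classes of size $\phi(m)$, together with Theorem \ref{Cyclizer nilpotent} identifying $\cy$ explicitly in each of the five structural cases.

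\textbf{The core obstruction via $\mathcal{R}$-classes.}
First I would observe that any two elements $x,y$ lying in the same $\mathcal{R}$-class are \emph{twins}: they have identical neighborhoods in $\c$, since adjacency of $x$ to $z$ depends only on whether $\langle x,z\rangle=\langle\langle x\rangle,z\rangle$ is cyclic, which is unchanged when $x$ is replaced by another generator of $\langle x\rangle$. This twin property is the engine of the proof. Suppose $T$ is a total perfect code and let $x\in T$ be matched to its unique neighbor $t\in T$. If the $\mathcal{R}$-class $[x]$ has size at least $2$, pick $x'\in[x]$ with $x'\neq x$; then $x'$ has the same neighbors as $x$, so $x'$ is also adjacent to $t$. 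Since $T$ is a total perfect code, $x'$ must have exactly one neighbor in $T$, forcing that neighbor to be $t$, hence $t$ is adjacent to two distinct vertices $x,x'$ of $\c$ both requiring $t$ as their unique $T$-neighbor — but a total perfect code forces $t$ itself to have exactly one $T$-neighbor and imposes no direct contradiction yet; the sharper contradiction comes from counting. I would instead argue: every element of $[x]$ must be matched to a vertex in $[t]$, and by symmetry every element of $[t]$ matched into $[x]$, so the matching restricted to $[x]\cup[t]$ is a perfect matching between two complete-multipartite parts, forcing $\phi(o(x))=|[x]|=|[t]|=\phi(o(t))$ and, crucially, forcing $[x]$ and $[t]$ to induce a complete bipartite graph inside which a perfect matching selects exactly one $T$-neighbor per vertex. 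The genuine contradiction arises because in a complete bipartite graph $K_{r,r}$ with $r\geq 2$, no subset $T$ of the vertices can be a total perfect code: each vertex on one side is adjacent to \emph{all} $r$ vertices on the other side, so if two or more of those lie in $T$ the "exactly one" condition fails.

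\textbf{Reducing to the existence of a large $\mathcal{R}$-class.}
The remaining task is to guarantee that such a problematic class always exists, i.e. that one cannot avoid classes of size $\geq 2$ whose companion class also has size $\geq 2$ with complete-bipartite adjacency. Here I would split according to the five cases of Theorem \ref{Cyclizer nilpotent}. In every case the group has an element $m\in\pi(G)$ with $\mf=\mathfrak{m}_m(G)\geq 2$ and $\phi(m)\geq 2$; the natural choice is an $m$ for which the relevant Sylow analysis via Lemma \ref{p subgroups}, Theorem \ref{Subgroups theorem}, and Lemmas \ref{m_d>2 G}, \ref{m_d>2 P G} forces both a multiplicity $\geq 2$ and Euler value $\phi(m)\geq 2$. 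By Lemma \ref{multipartite Lemma} the induced subgraph on $\Omega_m(G)$ is then complete $s$-partite with parts of size $\phi(m)\geq 2$, and restricting a hypothetical total perfect code to this subgraph yields, via the twin argument above, a $K_{r,r}$ with $r\geq 2$ into which the matching embeds, contradicting the "exactly one $T$-neighbor" requirement. The main obstacle I anticipate is handling the $2$-group pieces $\mathcal{D}_{2^{t+1}},\mathcal{Q}_{2^{t+1}},\mathcal{SD}_{2^{t+1}}$ where certain orders have multiplicity or Euler value equal to $1$ (e.g. $\phi(2)=1$); there I must carefully select an element order at which \emph{both} $\mathfrak{m}_m(G)\geq 2$ and $\phi(m)\geq 2$ hold simultaneously, using the direct factor $\mathcal{G}$ or a higher power of $2$ (order $4$, where $\phi(4)=2$) to supply the needed Euler value, and Lemma \ref{m_d>2 G} to supply the multiplicity. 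Assembling these case choices into a uniform contradiction completes the argument.
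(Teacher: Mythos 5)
Your twin observation is correct and useful (generators of the same cyclic subgroup have identical neighborhoods in $\c$, and in fact it follows that a total perfect code can contain at most one element from each $\mathcal{R}$-class: if twins $x,x'$ both lay in $T$, their common unique $T$-neighbor $t$ would itself be adjacent to two vertices of $T$). But the contradiction you build on top of it fails at two independent points. First, the claimed matching structure is wrong: since all elements of $[x]$ are twins, they all have the \emph{same} unique $T$-neighbor $t$ --- there is no ``perfect matching between $[x]$ and $[t]$,'' and nothing forces $\phi(o(x))=\phi(o(t))$. Second, and fatally, your terminal obstruction is false: $K_{r,r}$ \emph{does} admit a total perfect code for every $r\geq 2$, namely $T=\{u,v\}$ with one vertex from each side; then every vertex of one side is adjacent to exactly the one $T$-vertex on the other side. (Complete $s$-partite graphs with $s\geq 3$ parts admit no total perfect code, but that is not what you invoke.) There is also a structural gap even before that: a total perfect code of $\c$ does not restrict to a total perfect code of the induced subgraph on $\Omega_m(G)$, because the unique $T$-neighbor of a vertex of order $m$ may have a different order, so Lemma \ref{multipartite Lemma} cannot be applied to the restriction without further argument.

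The missing idea --- and the engine of the paper's actual proof --- is to control \emph{where} the $T$-neighbor of a chosen vertex can live, using nilpotency directly. The paper picks a non-cyclic Sylow $p$-subgroup, an element $x$ of order $p$, and notes via Theorem \ref{nilpotent}(iv) that if $g\in T$ satisfies $x\sim g$ with $\gcd(o(x),o(g))=1$, then $x$ and $g$ commute and $\langle x,g\rangle=\langle xg\rangle$ is cyclic, a contradiction; hence $p\mid o(g)$. This pins down an order-$p$ element $y\in\langle g\rangle$ with $\langle y\rangle\neq\langle x\rangle$, and repeating the step for $y$ yields $h\in T$ and $z\in\langle h\rangle$ of order $p$. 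Since $\mathfrak{m}_p(G)\geq p+1\geq 3$ by Theorem \ref{Subgroups theorem}, either $x$ or a third order-$p$ element $w$ (with $\langle w\rangle\notin\{\langle x\rangle,\langle y\rangle\}$) is adjacent to both $g$ and $h$, violating the ``exactly one'' condition. Your proposal never establishes any analogue of the divisibility step $p\mid o(g)$, which is precisely what lets one reason about neighbors in $T$ at all; without it, the approach as written does not close, even if the $K_{r,r}$ error were repaired by working with $s\geq 3$ parts.
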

\begin{proof}
    Let $G$ be a finite non-cyclic nilpotent group. If possible, assume that $\c$ has a total perfect code $T$. Since $G$ is a non-cyclic nilpotent group, $G$ has a non-cyclic Sylow subgroup $P$. Let $|P|=p^\alpha$ for some $\alpha \geq 2$. Consider $x\in P$ such that $o(x)=p$. By Theorem~\ref{Subgroups theorem}, we get $\mathfrak{m}_p(P)\geq 1+p$, and so $\mathfrak{m}_p(G)\geq 1+p$. Consider $x'\in P$ such that $o(x')=p$ and $\langle x\rangle \neq \langle x'\rangle$. Note that the subgroup $\langle x, x'\rangle$ is non-cyclic. It follows that $x\notin \cy$ and so $x\in V(\c)$. Now, let $g\in T$ such that $x\sim g$. It implies that $\langle x,g \rangle$ is non-cyclic. If $\gcd (o(x), o(g))=1$, then by Theorem~\ref{nilpotent}, we get $xg=gx$ and so $\langle x,g \rangle = \langle xg\rangle$. It follows that $\langle x,g \rangle\rangle$ is cyclic, a contradiction. Thus $\gcd (o(x), o(g))=p$. Let $y\in \langle g\rangle$ such that $o(y)=p$. Since $\langle x, g \rangle$ is non-cyclic, we must have $\langle x \rangle \neq \langle y \rangle$.  Let $h\in T$ such that $y\sim h$. Therefore $\gcd (o(y), o(h))=p$. Let $z\in \langle h \rangle$ such that $o(z)=p$. Clearly, $\langle z \rangle \neq \langle y \rangle$.

    Now, if $z\neq x$, then $\langle x, h \rangle $ is non-cyclic. It implies that $x\sim g$ and $x\sim h$, which is a contradiction to the fact that $x$ is adjacent to exactly one vertex of $T$. Suppose $z=x$. Consider $w\in G$ such that $o(w)=p$, $\langle x\rangle \neq \langle w \rangle$ and $\langle y\rangle \neq \langle w \rangle$. Then the subgroups $\langle w,g\rangle$ and $\langle w,h\rangle$ are non-cyclic. Consequently, $w\sim g$ and $w\sim h$, again a contradiction. Thus, $\c$ does not admit a total perfect code.
\end{proof}

\section{Acknowledgement}

\vspace{.5cm}
\textbf{Data Availability:} Data sharing not applicable to this article as no datasets were generated or analyzed during
the current study.
\vspace{.5cm}

\textbf{Funding}: {\sloppy The first author gratefully acknowledges the financial support received from the Indian Institute of Technology Guwahati under the Institute Post Doctoral Fellowship (IITG/R\&D/IPDF/2024-25/20240828P1181).\par}


\begin{thebibliography}{10}

\bibitem{a.AbawajyRees2016}
J.~Abawajy, A.~V. Kelarev, M.~Miller, and J.~Ryan.
\newblock Rees semigroups of digraphs for classification of data.
\newblock {\em Semigroup Forum}, 92(1):121--134, 2016.

\bibitem{a.Abdollahi2007}
A.~Abdollahi and A.~M. Hassanabadi.
\newblock Noncyclic graph of a group.
\newblock {\em Comm. Algebra}, 35(7):2057--2081, 2007.

\bibitem{a.Abdollahi2009}
A.~Abdollahi and A.~M. Hassanabadi.
\newblock Non-cyclic graph associated with a group.
\newblock {\em J. Algebra Appl.}, 8(2):243--257, 2009.

\bibitem{a.pgroupberkovi}
J.~G. Berkovi\v{c}.
\newblock {$p$}-groups of finite order.
\newblock {\em Sibirsk. Mat. \v{Z}.}, 9:1284--1306, 1968.

\bibitem{a.chattopadhyay2021minimal}
S.~Chattopadhyay, K.~L. Patra, and B.~K. Sahoo.
\newblock Minimal cut-sets in the power graphs of certain finite non-cyclic
  groups.
\newblock {\em Comm. Algebra}, 49(3):1195--1211, 2021.

\bibitem{a.costaEulerian2018}
D.~Costa, V.~Davis, K.~Gill, G.~Hinkle, and L.~Reid.
\newblock Eulerian properties of non-commuting and non-cyclic graphs of finite
  groups.
\newblock {\em Comm. Algebra}, 46(6):2659--2665, 2018.

\bibitem{a.cull1999}
P.~Cull and I.~Nelson.
\newblock Perfect codes, {NP}-completeness, and {T}owers of {H}anoi graphs.
\newblock {\em Bull. Inst. Combin. Appl.}, 26:13--38, 1999.

\bibitem{b.dummit1991abstract}
D.~S. Dummit and R.~M. Foote.
\newblock {\em Abstract algebra}.
\newblock Prentice Hall, Inc., Englewood Cliffs, NJ, 1991.

\bibitem{a.Gavlas1994}
H.~Gavlas, K.~Schultz, and P.~Slater.
\newblock Efficient open domination in graphs.
\newblock {\em Sci. Ser. A Math. Sci. (N.S.)}, 6:77--84, 1994.

\bibitem{Heden2008}
O.~Heden.
\newblock A survey of perfect codes.
\newblock {\em Adv. Math. Commun.}, 2:223--247, 2008.

\bibitem{b.pgroupisaac2006}
I.~M. Isaacs.
\newblock {\em Character theory of finite groups}.
\newblock AMS Chelsea Publishing, Providence, RI, 2006.

\bibitem{kelarev2002ring}
A.~Kelarev.
\newblock {\em Ring Constructions and Applications}.
\newblock World Scientific, River Edge, NJ, 2002.

\bibitem{kelarev2003graph}
A.~Kelarev.
\newblock {\em Graph Algebras and Automata}, volume 257 of {\em Monographs and
  Textbooks in Pure and Applied Mathematics}.
\newblock Marcel Dekker, Inc., New York, 2003.

\bibitem{a.kelarev2009cayley}
A.~Kelarev, J.~Ryan, and J.~Yearwood.
\newblock Cayley graphs as classifiers for data mining: the influence of
  asymmetries.
\newblock {\em Discrete Math.}, 309(17):5360--5369, 2009.

\bibitem{kelarev2004labelled}
A.~V. Kelarev.
\newblock Labelled {C}ayley graphs and minimal automata.
\newblock {\em Australas. J. Combin.}, 30:95--101, 2004.

\bibitem{Kratochvil1986}
J.~Kratochvíl.
\newblock Perfect codes over graphs.
\newblock {\em J. Comb. Theory Ser. B}, 40:224--228, 1986.

\bibitem{a.pgroupkulakoff}
A.~Kulakoff.
\newblock \"{U}ber die {A}nzahl der eigentlichen {U}ntergruppen und der
  {E}lemente von gegebener {O}rdnung in {$p$}-{G}ruppen.
\newblock {\em Math. Ann.}, 104(1):778--793, 1931.

\bibitem{a.maperfectproper}
X.~Ma.
\newblock Perfect codes in proper reduced power graphs of finite groups.
\newblock {\em Comm. Algebra}, 48(9):3881--3890, 2020.

\bibitem{a.ma2017perfect}
X.~Ma, R.~Fu, X.~Lu, M.~Guo, and Z.~Zhao.
\newblock Perfect codes in power graphs of finite groups.
\newblock {\em Open Math.}, 15(1):1440--1449, 2017.

\bibitem{a.Maautomorphism2022}
X.~Ma, J.~Li, and K.~Wang.
\newblock The full automorphism group of the noncyclic graph of a finite
  noncyclic.
\newblock {\em MATH. REPORTS}, 24(74)(4):737--744, 2022.

\bibitem{a.Ma2020Finite}
X.~Ma and H.~Su.
\newblock Finite groups whose noncyclic graphs have positive genus.
\newblock {\em Acta Math. Hungar.}, 162(2):618--632, 2020.

\bibitem{a.Macayleysum}
X.~Ma, K.~Wang, and Y.~Yang.
\newblock Perfect codes in {C}ayley sum graphs.
\newblock {\em Electron. J. Combin.}, 29(1), 2022.

\bibitem{Martinez2009}
C.~Martínez, R.~Beivide, and E.~Gabidulin.
\newblock Perfect codes from cayley graphs over lipschitz integers.
\newblock {\em IEEE Trans. Inf. Theory}, 55:3552--3562, 2009.

\bibitem{a.pgroupmiller}
G.~A. Miller.
\newblock An {E}xtension of {S}ylow's {T}heorem.
\newblock {\em Proc. London Math. Soc. (2)}, 2:142--143, 1905.

\bibitem{a.mishra2021lambda}
S.~Sarkar and M.~Mishra.
\newblock The lambda number of the power graph of a finite {$p$}-group.
\newblock {\em J. Algebraic Combin.}, 57(1):101--110, 2023.

\bibitem{Zhou2016}
S.~Zhou.
\newblock Total perfect codes in cayley graphs.
\newblock {\em Des. Codes Cryptogr.}, 81:489--504, 2016.

\end{thebibliography}

\vspace{1cm}
\noindent
{\bf Parveen\textsuperscript{\normalfont 1}, Bikash Bhattacharjya\textsuperscript{\normalfont 1}  }
\bigskip

\noindent{\bf Addresses}:

\vspace{5pt}


\end{document}